%
%
%

\documentclass[reqno]{amsart}

\input xy
\xyoption{all}
\usepackage{accents}
\usepackage{epsfig}
\usepackage{color}
\usepackage{amsthm}
\usepackage{amssymb}
\usepackage{amsmath}
\usepackage{amscd}
\usepackage{amsopn}
\usepackage{graphicx}
\usepackage{url}
\usepackage{enumitem, hyperref}\hypersetup{colorlinks}


\usepackage{color} 

\definecolor{darkred}{rgb}{1,0,0} 
\definecolor{darkgreen}{rgb}{0,0.8,0}
\definecolor{darkblue}{rgb}{0,0,1}

\hypersetup{colorlinks,
linkcolor=darkblue,
filecolor=darkgreen,
urlcolor=darkred,
citecolor=darkgreen}


\makeatletter
\def\reflb#1#2{\begingroup
    #2%
    \def\@currentlabel{#2}%
    \phantomsection\label{#1}\endgroup
}
\makeatother


%
%
%
%



\numberwithin{equation}{section}
\newtheorem {Theorem}{Theorem}
\numberwithin{Theorem}{section}

\newtheorem {Lemma}[Theorem]    {Lemma}

\newtheorem {Proposition}[Theorem]{Proposition}

\theoremstyle{definition}

\theoremstyle{remark}
\newtheorem{Remark}[Theorem]{Remark}


\def    \eps    {\epsilon}

\newcommand{\supp}{\operatorname{supp}}

\newcommand{\id}{{\mathit id}}

\newcommand{\const}{{\mathit const}}

\newcommand{\tF}{\tilde{F}}

\newcommand{\hH}{\hat{H}}
\newcommand{\hf}{\hat{f}}
\newcommand{\hj}{\hat{j}}

\newcommand{\tf}{\tilde{f}}
\newcommand{\tj}{\tilde{j}}

\newcommand{\Cc}{{\mathcal C}}

\def    \C      {{\mathbb C}}
\def    \R      {{\mathbb R}}

\def    \12    {{\frac{1}{2}}}

\def    \p      {\partial}
\def    \codim  {\operatorname{codim}}

\def    \rk     {\operatorname{rk}}

\def    \U     {\operatorname{U}}

\def \hn   {\scriptscriptstyle{H}}





\begin{document}


\setlength{\smallskipamount}{6pt}
\setlength{\medskipamount}{10pt}
\setlength{\bigskipamount}{16pt}





\title[Fragility and Persistence of Leafwise Intersections]{Fragility
  and Persistence of Leafwise Intersections}

\author[Viktor Ginzburg]{Viktor L. Ginzburg}
\author[Ba\c sak G\"urel]{Ba\c sak Z. G\"urel}

\address{BG: Department of Mathematics,
University of Central Florida, 
Orlando, FL 32816, USA}
\email{basak.gurel@ucf.edu}

\address{VG: Department of Mathematics, UC Santa Cruz, Santa Cruz, CA
  95064, USA} \email{ginzburg@ucsc.edu}

\subjclass[2010]{53D40, 53D12, 37J45} \keywords{leafwise
  intersections, coisotropic submanifolds, Hamiltonian Seifert
  conjecture}

\date{\today} 

\thanks{The work is partially supported by NSF grants DMS-1414685 (BG)
  and DMS-1308501 (VG)}

\bigskip

\begin{abstract}
  In this paper we study the question of fragility and robustness of
  leafwise intersections of coisotropic submanifolds. Namely, we
  construct a closed hypersurface and a sequence of Hamiltonians
  $C^0$-converging to zero such that the hypersurface and its images
  have no leafwise intersections, showing that some form of the
  contact type condition on the hypersurface is necessary in several
  persistence results.  In connection with recent results in
  continuous symplectic topology, we also show that $C^0$-convergence
  of hypersurfaces, Hamiltonian diffeomorphic to each other, does not
  in general force $C^0$-convergence of the characteristic foliations.

\end{abstract}

\maketitle

\tableofcontents

\section{Introduction and main results}
\label{sec:main-results}

\subsection{Introduction}
\label{sec:intro}

In this paper we study the question of fragility and existence of
leafwise intersections of coisotropic submanifolds. Our main result is
that leafwise intersections are fragile already for hypersurfaces and
need not exist even for $C^0$-small Hamiltonians: we construct a
closed hypersurface $M$ and a sequence of Hamiltonians $F_k$,
$C^0$-converging to zero, such that $M$ and its images
$\varphi_{F_k}(M)$ have no leafwise intersections. This shows, in
particular, that some form of the contact type condition on the
hypersurface $M$ is essential in Hofer's theorem, \cite{Ho}, stated
below and in its generalizations. Also, in
connection with the recent results from \cite{HLS,Op}, we prove that
the $C^0$-convergence of hypersurfaces Hamiltonian diffeomorphic to
each other does not, in general, imply $C^0$-convergence of their
characteristic foliations.

Let us now discuss our results in more detail and in a broader
context. Consider a closed coisotropic submanifold $M$ (e.g., a
hypersurface or a Lagrangian submanifold) of a symplectic manifold
$W$. Let $\varphi=\varphi_F$ be a compactly supported Hamiltonian
diffeomorphism of $W$, i.e., the time-one map of the flow generated by
a time-dependent Hamiltonian $F\colon S^1\times W\to \R$.  A
\emph{leafwise intersection} of $M$ and $\varphi(M)$ is a point $z\in
M$, or a pair $(z,\varphi(z))$, such that $\varphi(z)\in M\cap
\varphi(M)$ and moreover $z$ and $\varphi(z)$ lie on the same leaf of
the characteristic foliation of $M$. Thus leafwise intersections are
associated with $M$ and $\varphi$, and in the pair $(z,\varphi(z))$ it
is $\varphi(z)$ that is actually in $M\cap\varphi(M)$.

To the best of the authors' knowledge, leafwise intersections were
first considered in \cite{Mo}, and according to a theorem of Moser,
\cite{Mo}, and Banyaga, \cite{Ba}, leafwise intersections necessarily
exist when $M$ is a hypersurface and $\varphi$ is $C^1$-close to the
identity. In fact, this is true for any closed coisotropic submanifold,
as is easy to see by applying Weinstein's theorem on clean
intersections, \cite{We}, to the graph of the characteristic foliation
of $M$ near the diagonal; see \cite[p.\ 33]{Mo}.  Moreover, one has
the Lusternik--Schnirelmann and Morse type inequalities for the number
of leafwise intersections.  

Recently, Moser's theorem was strengthened by Ziltener in \cite{Zi:Mo},
where it was shown that leafwise intersections must exist for any
closed coisotropic submanifold $M$ whenever $\varphi$ is the time-one
map of a Hamiltonian isotopy $\varphi^t$ which is $C^0$-close to $\id$
or, more generally, when $\varphi^t(M)$ stays $C^0$-close to
$M$. Furthermore, in this case one still has the Lusternik--Schnirelmann
(cup-length) and Morse type multiplicity results. This result is in some sense
sharp since the condition that $\varphi$ is close to $\id$ is clearly
necessary unless $M$ meets some additional requirements.

Chronologically, however, the next crucial step after Moser's theorem
was a theorem of Hofer from \cite{Ho} (see also \cite{EH}) asserting
the existence of leafwise intersections for hypersurfaces in $\R^{2n}$
of restricted contact type, provided that $\varphi$ has sufficiently
small Hofer's norm
$$
\|\varphi\|_{\hn}:=\inf_{\varphi_{F}=\varphi}\| F\|_{\hn}, \textrm{
  where }
\| F\|_{\hn}=\int_{S^1}\left(\max F_t-\min F_t\right)\, dt.
$$
(Here we are assuming that $\varphi$ and $F$ are compactly supported.)
Moreover, in this theorem, the upper bound on $\|\varphi\|_{\hn}$ is
given by a certain homological capacity of the domain bounded by $M$;
see, e.g., \cite[Thm.\ 2.9]{Gi:coiso} for a symplectic topological
treatment of the question. Note also that in $\R^{2n}$ leafwise
intersections obviously need not exist when $\varphi$ is too far from
$\id$; for in this case we can easily have $M\cap
\varphi(M)=\emptyset$.

Since then the problem of existence of leafwise intersections has been
extensively investigated, and Hofer's theorem has been extended to
coisotropic submanifolds and to other ambient symplectic manifolds;
see, e.g., \cite{AF:rab,AF,AMc,AMo,Dr,Gi:coiso,Gu,Ka,Zi} for an
admittedly incomplete but representative list of results on leafwise
intersections. A common feature of these results is that, in contrast
with Moser's theorem, to ensure the existence of leafwise
intersections one has to impose some additional requirements on the
hypersurface or the coisotropic submanifold. This is usually a variant
of the contact type condition, but in \cite{Zi} leafwise intersections
are studied under the assumption that the characteristic foliation is
a fibration.

The main result of this paper (Theorem \ref{thm:main}) shows that some
assumption on $M$ is indeed necessary in Hofer's theorem to guarantee
the existence of leafwise intersections already when $M$ is a closed
hypersurface in $\R^{2n}$. It also shows that in Ziltener's theorem
one cannot replace $C^0$-norm by Hofer's norm. To be more specific, we
construct a closed smooth hypersurface $M\subset\R^{2n\geq 4}$,
$C^0$-close to the standard round sphere $S^{2n-1}$, and a sequence of
autonomous Hamiltonians $F_k$, $C^0$-converging to $0$ and supported
in the same compact set, such that $M$ and $\varphi_{F_k}(M)$ have no
leafwise intersections for all $k$. The proof relies heavily on the
construction of counterexamples to the Hamiltonian Seifert conjecture;
see, e.g., \cite{Gi:survey} and references therein.  Note that in
Theorem \ref{thm:main}, the convergence to zero is much stronger than
the convergence in Hofer's norm and not obviously related to the
$C^0$-convergence of the maps $\varphi_{F_k}$. (Apparently, the
sequence $\varphi_{F_k}$ we constructed does not $C^0$-converge; by
\cite{Zi:Mo}, it cannot $C^0$-converge to $\id$.)

Regarding the requirements on $M$, there is still a considerable gap
between what is currently known for hypersurfaces in $\R^{2n}$ and our
example. For instance, it is still not known if leafwise intersections
must exist when $M$ is a stable (in the sense of \cite{HZ}) closed
hypersurface in $\R^{2n}$ and $\|\varphi\|_{\hn}$ is sufficiently
small. The notion of stability can be extended to coisotropic
submanifolds (see \cite[Section 5]{Bo} for the original definition and
\cite{Gi:coiso} for a detailed discussion), and the question also
makes sense for closed coisotropic submanifolds.  Drawing from the
results in \cite{Us}, it seems reasonable to conjecture that the right
condition on $M$ for the existence of leafwise intersections of $M$
and $\varphi(M)$ when $\|\varphi\|_{\hn}$ is small is that the
characteristic foliation of $M$ is totally geodesic with respect to
some metric. However, as of this writing, this conjecture has far from
been proved. Without stability (or the totally geodesic condition) it
is not even known whether a closed coisotropic submanifold $M\subset
\R^{2n}$ with $1<\codim M<n$ must intersect $\varphi(M)$ when
$\|\varphi\|_{\hn}$ is small; cf.\ \cite{Gi:coiso, Ke, Us}.
 
Our second result concerns a different aspect of coisotropic
rigidity. An important question in the area, stemming from the analogy
between coisotropic and Lagrangian submanifolds, is whether or not a
smooth $C^0$-limit $M$ of smooth coisotropic submanifolds $M_k$ must
be coisotropic; cf.\ \cite{LS} for the Lagrangian counterpart. No
counterexamples are known even in the most general setting, but it is
not unreasonable to impose additional requirements on $M_k$ of two
types: stability or contact type conditions and that the submanifolds
$M_k$ are Hamiltonian diffeomorphic to each other. Under the latter
condition, one can also ask, provided that $M$ is indeed coisotropic,
if the characteristic foliations of $M_k$ converge to the
characteristic foliation of $M$. This second question is already of
interest when $M_k$ and $M$ are hypersurfaces and hence $M$ is
automatically coisotropic. (Note also that without the assumption that
the hypersurfaces $M_k$ are symplectomorphic, the characteristic
foliations need not converge, as is easy to see.) The answer to both
questions is affirmative when Hamiltonian diffeomorphisms between
$M_1$ and $M_k$ also $C^0$-converge to a homeomorphism; see \cite{HLS}
and also \cite{BO,Op}.

We show in Theorem \ref{thm:converge} that without this convergence
assumption or without extra assumptions on $M_k$ the answer to the
second question is negative for hypersurfaces. Namely, our proof of
Theorem \ref{thm:main} yields a sequence of hypersurfaces $M_k\subset
\R^{2n\geq 4}$ Hamiltonian diffeomorphic to each other and
$C^0$-converging to the round sphere $S^{2n-1}\subset \R^{2n}$, but
such that the characteristic foliations of $M_k$ do not $C^0$-converge
to the characteristic foliation on $M=S^{2n-1}$. More precisely, there
exists a sequence of closed characteristics $L_k\subset M_k$
$C^\infty$-converging to a simple closed curve in $S^{2n-1}$ which is
nowhere tangent to the characteristic foliation. Furthermore, the
characteristic foliation of $M_k$ is not homeomorphic to the
characteristic foliation on $S^{2n-1}$. The hypersurfaces $M_k$ are
diffeomorphic to $S^{2n-1}$ but not stable; see Remark
\ref{rmk:stability}.

\subsection{Main results}
\label{sec:result}
Before stating the main theorems of the paper, let us briefly recall
relevant definitions, some of which we have already used in Section
\ref{sec:intro}. Let $(W^{2n},\sigma)$ be a symplectic manifold; this
is just the standard symplectic $\R^{2n}$ in most of the results
considered here. Given a Hamiltonian $F\colon S^1\times W\to \R$,
which we will always assume to be compactly supported, we denote the
(time-dependent) Hamiltonian flow of $F$ by $\varphi_F^t$ and the
time-one map of this flow by $\varphi_F$. (Our sign convention for the
Hamiltonian vector field $\xi_F$ of $F$ is
$i_{\xi_{F_t}}\sigma=-dF_t$.) Throughout the paper, for the sake of
simplicity, all maps and functions are assumed to be $C^\infty$-smooth
unless explicitly stated otherwise.

Furthermore, recall from Section \ref{sec:intro} that, given a
coisotropic submanifold $M$ of $W$ (e.g., a hypersurface) and a
Hamiltonian diffeomorphism $\varphi=\varphi_F$, a \emph{leafwise
  intersection} of $M$ and $\varphi(M)$ is a point $z\in M$ such that
$\varphi(z)\in M\cap \varphi(M)$ and $z$ and $\varphi(z)$ lie on the
same leaf of the characteristic foliation of $M$. Here $\varphi(z)$,
rather than $z$, is actually an intersection of $M$ and
$\varphi(M)$. This, however, should cause no problem since $\varphi$
gives rise to a one-to-one correspondence between the leafwise
intersections $z$ and the points $\varphi(z)$. Sometimes we will also
refer to the pair $(z,\varphi(z))$ as a leafwise
intersection. (Although the definitions of leafwise intersections vary
between different papers, this one, arguably the most naive, is
sufficient for our purposes.) Leafwise intersections of $M$ and
$\varphi(M)$ depend on $M$ and the map $\varphi$, but only on the pair
of coisotropic submanifolds $M$ and $\varphi(M)$. We refer the reader
to, e.g., \cite{Gi:coiso} for a general discussion of coisotropic
submanifolds in the context of symplectic topology.

The main result of the paper is the following.

\begin{Theorem}
\label{thm:main} 
There exists a closed, smooth hypersurface $M\subset\R^{2n}$, $2n\geq
4$, and a sequence of $C^\infty$-smooth autonomous Hamiltonians
$F_k\stackrel{C^0}{\to} 0$, supported in the same compact set, such
that $M$ and $\varphi_{F_k}(M)$ have no leafwise intersections.
\end{Theorem}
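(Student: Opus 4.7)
The plan is to exploit $C^\infty$-smooth counterexamples to the Hamiltonian Seifert conjecture, adapted so as to block leafwise intersections for a specific $C^0$-null sequence of autonomous Hamiltonians. First, I would recall (see \cite{Gi:survey} and the references therein) the construction of a smooth closed hypersurface $M \subset \R^{2n}$, for $2n\geq 4$, that is $C^0$-close and diffeomorphic to $S^{2n-1}$ and whose characteristic foliation has no closed leaves. Such an $M$ is obtained by inserting a smooth symplectic plug into $S^{2n-1}$: the plug replaces the Hopf foliation inside a small region with a non-recurrent one, breaking every closed characteristic.

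Next, I would explain the shape of the Hamiltonians $F_k$. One cannot take $F_k=\epsilon_k G$ for a fixed $G$ with $\epsilon_k\to 0$: in that case $\varphi_{F_k}\to \id$ in $C^0$, and Ziltener's theorem \cite{Zi:Mo} forces the existence of leafwise intersections. Instead I would look for $F_k$ with $\|F_k\|_{C^0}\to 0$ but $\varphi_{F_k}$ of bounded, non-vanishing, displacement (consistent with the remark in Section \ref{sec:intro} that $\varphi_{F_k}$ does not $C^0$-converge). A natural ansatz is $F_k(z):=\tfrac{1}{k}\Phi(k\,G(z))$, for a fixed compactly supported Hamiltonian $G$ on $\R^{2n}$ and a fixed smooth oscillatory profile $\Phi$ with $\Phi(0)=0$. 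Then $\|F_k\|_{C^0}=O(1/k)$, while $\xi_{F_k}=\Phi'(kG)\,\xi_G$ is bounded and produces a time-one map $\varphi_{F_k}$ that does not degenerate to the identity.

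The bulk of the work is then to build the plug and choose $G$ in tandem so that on $M$ the Hamiltonian vector field $\xi_G$ is transverse to the characteristic foliation at every point where $\varphi_{F_k}$ can return a point of $M$ back to $M$. Granted such transversality, for each $z\in M$ the image $\varphi_{F_k}(z)$ either lies off $M$ (when $\xi_G(z)$ has a nontrivial component transverse to $T_z M$) or lies in $M$ but on a characteristic distinct from the one through $z$ (when $\xi_G(z)\in T_z M$ is transverse to the characteristic direction). In either case, $(z,\varphi_{F_k}(z))$ is not a leafwise intersection.

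The main obstacle is achieving this $\xi_G$-transversality simultaneously with the Seifert condition. A closed hypersurface in $\R^{2n}$ cannot admit a globally transverse Hamiltonian vector field, so one must exploit the specific geometry of the plug: the rerouting of characteristics within the plug region is arranged to be everywhere transverse to a chosen Hamiltonian shear direction carried by $\xi_G$, while the absence of closed characteristics in $M$ keeps the analysis local (no characteristic returns to the same plug region). Layering the standard Hamiltonian Seifert plug with an additional transverse shear provided by $G$ is the central technical step, and should produce $M$ and $G$ (and hence the sequence $F_k$) with all the required properties.
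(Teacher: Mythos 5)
Your proposal diverges from the paper's actual argument in two crucial places, and both divergences open genuine gaps.

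First, the mechanism for producing the sequence $F_k$. You propose $F_k=\tfrac1k\Phi(kG)$ for a fixed $G$ and oscillatory $\Phi$, i.e., a rapidly modulated reparametrization of a single Hamiltonian vector field. The paper does something entirely different and much more structured: it first finds, for each $\eps_k\to 0$, a hypersurface $M_k$ (obtained by inserting thin symplectic plugs near the poles of $S^{2n-1}$, displaced by the translation $w_k=(0,\eps_k,0,\ldots)$) so that $M_k$ and $\varphi_{\tF_k}(M_k)$ have no leafwise intersections, with $\tF_k=\eps_k\chi\,p_1$. The heart of the proof is Proposition \ref{prop:plugs}, which shows that plugs with shrinking parameters $(T_k,a_k)\to 0$ can all be chosen Hamiltonian diffeomorphic to a fixed one via maps $\eta_k$ equal to the identity near the boundary. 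This is proved via a Moser-type argument reduced to a singular Cauchy problem solved by the method of characteristics. One then fixes $M:=M_1$ and sets $F_k:=\tF_k\circ\eta_k$; the conjugation $\varphi_{F_k}=\eta_k^{-1}\varphi_{\tF_k}\eta_k$ transports the absence of leafwise intersections of $(M_k,\varphi_{\tF_k}(M_k))$ to $(M,\varphi_{F_k}(M))$, while $\|F_k\|_{C^0}=\eps_k\to 0$ and $\supp F_k=\supp\chi$ is fixed. Your ansatz contains no analogue of this Hamiltonian-equivalence step, which is exactly what allows $M$ to be fixed while the Hamiltonians shrink.

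Second, and more seriously, your verification that there are no leafwise intersections does not work. Transversality of $\xi_G(z)$ to the characteristic direction at $z$ only controls the infinitesimal displacement; it gives no global information about where $\varphi_{F_k}(z)$ lands. A leafwise intersection requires $z$ and $\varphi_{F_k}(z)$ to lie on the \emph{same leaf}, possibly far apart, and the $G$-trajectory through $z$ can certainly return to the same (non-closed or even closed) characteristic at a distant point even when $\xi_G$ is transverse to the foliation along the way. The paper's argument avoids this entirely by an explicit global analysis: it shows the only candidate leafwise intersections on $S^{2n-1}$ for the shear $\varphi_{\tF}$ are the two poles $z^\pm$, and the plugs sever the Hopf circle through them so that the characteristic through $z^\pm$ in $M$ becomes a trapped, non-returning arc, while the plug-symmetry condition \ref{(P4)} (matching entry and exit coordinates) together with the displacement of the plugs by $\varphi_{\tF}$ precludes the creation of any new leafwise intersections. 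Nothing in your proposal plays the role of this global accounting.

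A smaller point: for $2n=4$ the paper does \emph{not} use a Hamiltonian Seifert counterexample with no closed characteristics; it uses Cieliebak's plugs with a circular core, so $M$ retains closed characteristics (the periodic orbits $S^1\times\{p_\pm\}$ inside the plugs). Aperiodic plugs are only mentioned as an optional enhancement for $2n\geq 6$. Your framing around ``no closed leaves'' is thus a misremembering of the construction, and in fact the closed characteristics inside the plugs are exactly what is used in the proof of Theorem \ref{thm:converge}.
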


Here the hypersurface $M$ cannot have contact type by the results of
\cite{Ho} and, in fact, $M$ is not even stable in the sense of
\cite{HZ}; see Remark \ref{rmk:stability} for a proof of this fact.

\begin{Remark}
  It readily follows from the proof that $M$ can be chosen to be
  diffeomorphic and arbitrarily $C^0$-close to the round sphere
  $S^{2n-1}$, and the Hamiltonians $F_k$ can also be chosen to be
  supported in an arbitrarily small neighborhood of $S^{2n-1}$. To be
  more precise, for any $\delta>0$, we can ensure that $M$ is the
  image of an embedding which is $\delta$-close to the standard
  embedding $S^{2n-1}\hookrightarrow\R^{2n}$ and that for all $k$ the
  Hamiltonians $F_k$ are supported in the $\delta$-neighborhood of
  $S^{2n-1}$ and $\varphi_{F_k}$ is also $\delta$-close to $\id$. As
  has been pointed out above, $\varphi_{F_k}$ cannot $C^0$-converge to
  $\id$ for a fixed~$M$ due to the results from \cite{Zi:Mo}.
\end{Remark}

As a byproduct of the proof of Theorem \ref{thm:main}, we obtain the
following result.

\begin{Theorem}
\label{thm:converge}
There exists a sequence of closed, smoothly embedded hypersurfaces
$M_k\subset \R^{2n}$, $2n\geq 4$, Hamiltonian diffeomorphic to each
other and $C^0$-converging and diffeomorphic to the round sphere
$S^{2n-1}\subset \R^{2n}$, but such that there exists a sequence of
closed characteristics $L_k\subset M_k$ $C^\infty$-converging to a
simple closed curve in $S^{2n-1}$ which is nowhere tangent to the
characteristic foliation and intersects every characteristic at at
most one point. Furthermore, the characteristic foliation on $M_k$ is
not homeomorphic to the characteristic foliation (the Hopf fibration)
on the sphere $S^{2n-1}$.
\end{Theorem}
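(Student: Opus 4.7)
The plan is to extract Theorem~\ref{thm:converge} directly from the construction used to prove Theorem~\ref{thm:main}. That construction modifies $S^{2n-1}$ inside a tubular neighborhood of a preselected curve by splicing in a Hamiltonian Seifert-type plug. The task is to carry it out in a family parametrized by a shrinking scale, producing a sequence of hypersurfaces that $C^0$-converge to $S^{2n-1}$, are pairwise Hamiltonian diffeomorphic, and carry closed characteristics whose $C^\infty$-limit is transverse to the Hopf fibration.

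First I would fix a simple closed curve $\gamma\subset S^{2n-1}$ that is nowhere tangent to the Hopf fibration and meets each Hopf fiber in at most one point. Such $\gamma$ is produced by lifting an embedded loop from $\CP^{n-1}$ through a local section of the Hopf bundle over a neighborhood of the loop. For each small $\epsilon>0$, I insert a version of the Seifert plug inside the $\epsilon$-tubular neighborhood $V_\epsilon$ of $\gamma$, arranged so that the resulting hypersurface $M^\epsilon$ agrees with $S^{2n-1}$ outside $V_\epsilon$ and carries a distinguished closed characteristic $L^\epsilon$ lying $O(\epsilon)$-close to $\gamma$ in $C^\infty$, up to reparametrization. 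For any sequence $\epsilon_k\to 0$, setting $M_k:=M^{\epsilon_k}$ and $L_k:=L^{\epsilon_k}$ then yields $M_k\to S^{2n-1}$ in $C^0$, $L_k\to\gamma$ in $C^\infty$, and the required transversality of the limit, since $\gamma$ was chosen transverse from the outset.

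To promote the $M_k$ to being pairwise Hamiltonian diffeomorphic, I would realize the passage from one plug scale to the next via a compactly supported Hamiltonian diffeomorphism of $\R^{2n}$ constructed in Darboux coordinates on a fixed tubular neighborhood of $\gamma$. The generator is a symplectic shear along $\gamma$: it stretches in the $\gamma$-direction while compressing transversely, so its time-$t$ map carries one scale of the plug to a thinner, longer copy of the same plug. The plug itself must be designed with a suitably large aspect ratio along $\gamma$ so that the shrunken image still fits inside $V_{\epsilon_k}$ and so that $\psi_k(M_k)=M_{k+1}$ makes sense globally after extending by the identity outside a fixed neighborhood of $\gamma$.

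Finally, the characteristic foliation of $M_k$ cannot be homeomorphic to the Hopf fibration: the Seifert plug carries a non-closed orbit (this is intrinsic to the Hamiltonian Seifert-type constructions), and a homeomorphism of foliations preserves compactness of leaves, whereas every leaf of the Hopf fibration is a circle. The main obstacle is the third step: naive rescaling is not symplectic, and producing genuine Hamiltonian diffeomorphisms $\psi_k$ with $\psi_k(M_1)=M_k$ forces the plug to be built with compatible anisotropic geometry along $\gamma$ from the outset, so that symplectic shearing along $\gamma$ trades transverse thickness for longitudinal length in a controlled way.
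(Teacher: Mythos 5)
Your proposal follows the paper's route at the top level: Theorem~\ref{thm:converge} is extracted as a byproduct of the plug-insertion construction used for Theorem~\ref{thm:main}; the closed characteristics $L_k$ are taken to be the core periodic orbits of the plugs; the trapped, non-compact leaves inside the plugs show the characteristic foliation of $M_k$ cannot be homeomorphic to the Hopf fibration; and the limit curve is transverse to the Hopf fibration because the plug is inserted along a symplectic band. All of this matches the paper.

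The genuine gap is exactly what you flag as ``the main obstacle'': establishing that the $M_k$ are Hamiltonian diffeomorphic to one another. In the paper this is Proposition~\ref{prop:plugs}, which is the technical heart of the argument, and the fix you propose does not resolve it. First, a symplectic ``shear along $\gamma$'' that ``stretches in the $\gamma$-direction while compressing transversely'' is not geometrically available: the $\gamma$-direction is a closed circle, and the paper's hyperbolic transformation acts only in the $(t,y)$-plane (the Hopf direction and the normal direction to the hypersurface), with the $\theta$ and $x$ variables unchanged; the transverse width $\delta$ is in fact held fixed throughout. Second, and more seriously, a shear of the type $(t,y)\mapsto(e^{-\kappa}t,e^{\kappa}y)$ \emph{scales up} the function $f$ in the plug data (roughly $f\mapsto e^{\kappa}f$), so by itself it produces a \emph{worse} plug, not a thinner one. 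The paper's resolution is two-stage: first a Moser-method argument (Lemma~\ref{lemma:psi}), which amounts to solving a singular Cauchy problem for $L_{\xi_H}g=F_s$ by the method of characteristics and extending the solution smoothly across the critical points $p_\pm$ of $H$, to obtain a Hamiltonian-diffeomorphic plug with $\|f\|_{C^0}$ made arbitrarily small while $H$ is fixed; only then is the hyperbolic shear applied, with $\kappa$ and the preliminary smallness of $f$ chosen compatibly so that both $T$ and the bound $a$ on $|f|$ shrink. Without the Moser step and the smoothness analysis at $p_\pm$, the shear-based construction does not close, and this is precisely the step your proposal leaves unproved.
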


The first assertion of the theorem should be understood as that the
characteristic foliations on $M_k$ do not $C^0$-converge to the
characteristic foliation on $S^{2n-1}$. (Here we leave aside a
somewhat delicate matter of defining $C^0$-convergence of foliations
(cf.\ \cite{Ep}) further complicated by the fact that in this context
$M_k$ are different, although diffeomorphic, manifolds.) The
assumption that the hypersurfaces $M_k$ are Hamiltonian diffeomorphic
to each other is essential -- without it, it is obvious that the
characteristic foliations on $M_k$ need not to converge to the
characteristic foliation on $M$ in any sense.

\subsection{Outline of the proofs}
\label{sec:outline}
The proofs of Theorems \ref{thm:main} and \ref{thm:converge} rely
heavily on the methods developed to construct counterexamples to the
Hamiltonian Seifert conjecture; see, e.g., \cite{Gi:survey}.

Let $S^{2n-1}$ be the unit sphere in $\R^{2n}$ with standard Darboux
coordinates, say, $(p_1,q_1,\ldots,p_n,q_n)$. Set $\tF=\eps\chi\cdot
p_1$, where $\chi$ is a cut-off function equal to one near
$S^{2n-1}$. Near $S^{2n-1}$, the map $\varphi_{\tF}$ is the parallel
transport by the vector $w=(0,\eps, 0,\ldots, 0)$. For $\eps>0$ small,
the only leafwise intersections of $S^{2n-1}$ and
$\varphi_{\tF}(S^{2n-1})$ are two points $z^\pm$ on the unit circle
$S$ in the $(p_1,q_1)$-plane, located near the North and the South
Poles on $S^{2n-1}$. (The points $\varphi_{\tF}(z^\pm)$ are the
intersections of $S$ and the transported circle $S+(0,\eps)$ in
$\R^2$.) Let us now insert two symplectic plugs into $S^{2n-1}$ to
interrupt $S$ between $z^+$ and $\varphi_{\tF}(z^+)$ and between $z^-$
and $\varphi_{\tF}(z^-)$ as in, e.g., \cite{Gi95,Gi:survey}; see Fig.\
\ref{fig:shift}. Here, however, since our goal is just to break the
characteristic $S$, we can use the plugs from \cite{Ci} with circular
cores.  Hence the only dimensional constraint is that $2n\geq 4$. We
choose the plugs narrow and thin, with width much smaller than
$\eps/2$, located in a very small neighborhood of the intersection of
$S^{2n-1}$ and the plane $q_1=0$, and place them in such a way that
they are displaced by $\varphi_{\tF}$. As a result, we obtain a new
hypersurface $M$ which is $C^0$-close to $S^{2n-1}$, differs from
$S^{2n-1}$ only within the plugs, and such that the characteristic $S$
is broken into several characteristics: one containing $z^\pm$ and
some other containing $\varphi_{\tF}(z^\pm)$.  This procedure is
illustrated in Fig.\ \ref{fig:shift}. We claim that $M$ and
$\varphi_{\tF}(M)$ have no leafwise intersections.  Indeed, the points
$z^\pm$ are no longer leafwise intersections for $M$ and
$\varphi_{\tF}(M)$, and since the plugs are displaced and due to the
plug-symmetry conditions, no new leafwise intersections are created.

\begin{figure}[h!]
\begin{center} 
\def\svgwidth{0.7\columnwidth}
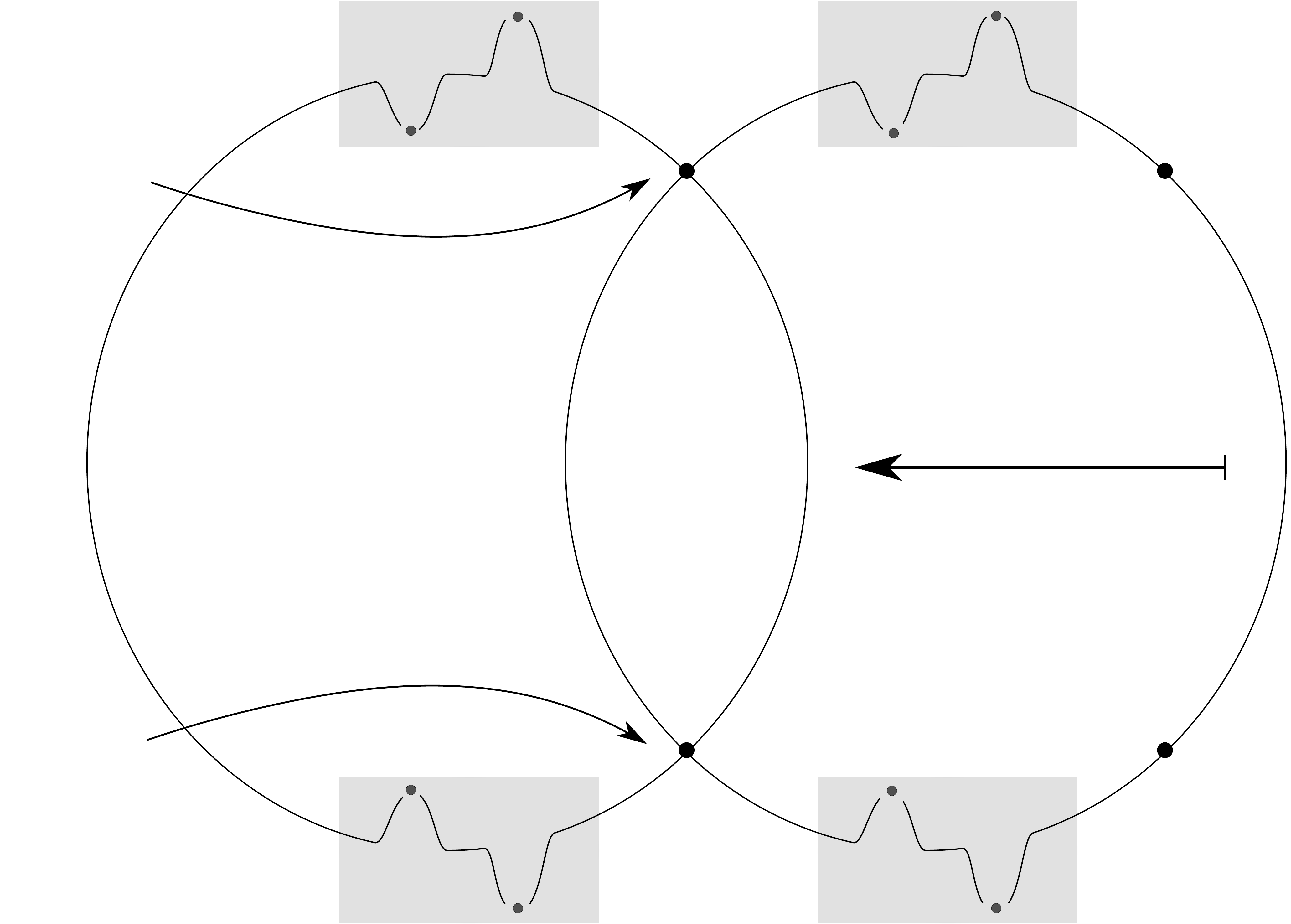
\caption{Breaking leafwise intersections.}
\label{fig:shift}
\end{center}
\end{figure}

Applying this construction to a sequence $\eps_k\to 0$, we obtain a
sequence of perturbations $M_k$ of $S^{2n-1}$ and a sequence of
Hamiltonians $\tF_k=\eps_k\chi\cdot p_1$ such that $M_k$ and
$\varphi_{\tF_k}(M_k)$ have no leafwise intersections and
$\tF_k\stackrel{C^\infty}{\to} 0$. Note also that the sequence $M_k$
can be chosen to $C^0$-converge to $S^{2n-1}$, and this is essential
for the proof of Theorem \ref{thm:converge}.

So far we have stayed close to the construction from \cite[Example
7.2]{Gi:coiso}. Now a crucial new step is the observation (Proposition
\ref{prop:plugs}) that the plugs can be inserted so that all
hypersurfaces $M_k$ are Hamiltonian diffeomorphic to $M=M_1$, i.e.,
there exists a sequence of Hamiltonian diffeomorphisms $\eta_k\colon
\R^{2n}\to\R^{2n}$ such that $\eta_k(M)=M_k$, and the maps $\eta_k$
are supported within the same compact set. (The key point is to find
``arbitrarily thin'' plugs Hamiltonian diffeomorphic to a given one. A
Hamiltonian diffeomorphism is constructed using Moser's method, which
ultimately reduces to a variant of a (singular) Cauchy problem for a
first-order PDE. We solve the Cauchy problem by the standard method of
characteristics, but extra care is needed at this step to account for
singularities.)  Then the Hamiltonians $F_k=\tF_k\circ\eta_k$ are also
supported within the same compact set. Clearly, $\varphi_{F_k}(M)$ and
$M$ have no leafwise intersections, and
$F_k\stackrel{C^0}{\to}0$. This proves Theorem \ref{thm:main} and also
Theorem \ref{thm:converge} with $M_k$ taken as the required sequence
of hypersurfaces.

This argument is essentially independent of the dimension $2n$, and
hence here we only detail it for $2n=4$. The general case can be
handled in a similar fashion. Namely, as in other Hamiltonian plug
constructions (see, e.g., \cite{Gi:survey}), one takes the product of
the lower--dimensional plug $P$ described here and the symplectic ball
$B^{2m}$ and equips $P\times B^{2m}$ with a $\U(m)$-invariant ``plug
two-form'' standard near the boundary of $P\times B^{2m}$.

The proof is organized as follows. In Section \ref{sec:plugs}, we
describe the plugs, state Proposition \ref{prop:plugs}, and derive
Theorems \ref{thm:main} and \ref{thm:converge} from the
proposition. Proposition \ref{prop:plugs} is then proved in Section
\ref{sec:pf_plugs}.

\section{Symplectic plugs in $\R^4$}
\label{sec:plugs}

\subsection{Plugs}
In this section we discuss the construction of a symplectic plug in
the setting specifically tailored to the proof of Theorem
\ref{thm:main}; we refer the reader to, e.g., \cite{Gi:survey} for a
treatment of the plugs in a much more general context. Let $\Pi=
[-\delta,\,\delta]\times [-T,\,T]$, for some $\delta >0$ and $T>0$,
with coordinates $(x,t)$ and let $P=S^1\times \Pi$. We denote the
angle coordinate on $S^1$ by $\theta$. For two auxiliary functions $f$
and $H$ on $\Pi$ to be specified later, set
\begin{equation}
\label{eq:form}
\omega=d(H\,d\theta-f\,dt) .
\end{equation}
Furthermore, consider the product $B= P\times [-a,\,a]$ for some
$a>0$, and denote by $y$ the coordinate on $[-a,\,a]$. Thus we have
$$
B=
\underbrace{{\strut S^1}}_{\theta}
\times
\underbrace{\strut [-\delta,\,\delta]}_{x} 
\times 
\underbrace{\strut [-T,\,T]}_{t}
\times 
\underbrace{\strut  [-a,\,a]}_{y}.
$$
We equip $B$ with the symplectic form
$$
\sigma=dx\wedge d\theta + dy\wedge dt
$$
and identify $P$ with the subset $y=0$ of $B$.

\begin{Lemma} 
\label{lemma:non-deg}
Assume that $f'_x$ and $H'_x$ do not vanish simultaneously, $|f|<a$
and $|H|\leq \delta$, and that $f\equiv 0$ and $H\equiv x$ near
$\p\Pi$. Then $\omega$ is a maximally non-degenerate form on $P$ with
characteristic vector field
\begin{equation}
\label{eq:X}
X=f'_x \frac{\p}{\p \theta} -H'_t\frac{\p}{\p x} +H'_x\frac{\p}{\p t}
=f'_x \frac{\p}{\p \theta}+\xi_H,
\end{equation}
where $\xi_H$ is the Hamiltonian vector field of $H$ on $(\Pi,dx\wedge
dt)$.  In other words, $\rk\omega=2$ and $i_X\omega=0$ and $X\neq 0$.
Furthermore, there exists an embedding $j\colon P\to P\times (-a,\,a)$
such that $j=(\id,0)$ near $\p P$ and $j^*\sigma=\omega$.
\end{Lemma}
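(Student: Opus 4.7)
\emph{Plan of proof.} The first assertion of the lemma I would dispatch by a direct coordinate computation. Expanding $\omega = d(H\,d\theta - f\,dt)$ gives
\begin{equation*}
\omega \;=\; H'_x\, dx\wedge d\theta + H'_t\, dt\wedge d\theta - f'_x\, dx\wedge dt,
\end{equation*}
from which $i_{\p_\theta}\omega$, $i_{\p_x}\omega$, and $i_{\p_t}\omega$ can be read off term by term. Then $f'_x\, i_{\p_\theta}\omega - H'_t\, i_{\p_x}\omega + H'_x\, i_{\p_t}\omega$ collapses to zero after cancellation, establishing $i_X\omega = 0$ for $X$ as claimed. The hypothesis that $(f'_x, H'_x)\neq(0,0)$ at every point simultaneously forces $X\neq 0$ (at least one of its $\p_\theta$- or $\p_t$-components is nonzero) and $\omega\neq 0$ (otherwise both $H'_x$ and $f'_x$ would vanish); since $\omega$ is a $2$-form on the $3$-dimensional $P$, nonvanishing at every point forces $\rk\omega = 2$, with characteristic line spanned by $X$.

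For the embedding I would propose the explicit map
\begin{equation*}
j(\theta, x, t) \;=\; \bigl(\theta,\, H(x,t),\, t,\, -f(x,t)\bigr),
\end{equation*}
which lands in $P \times (-a,\,a)$ because $|H|\leq\delta$ and $|f|<a$. Near $\p\Pi$, where $H\equiv x$ and $f\equiv 0$, this reduces to $j = (\id,0)$. The pullback is then a single line:
\begin{equation*}
j^*\sigma \;=\; j^*(dx\wedge d\theta) + j^*(dy\wedge dt) \;=\; dH\wedge d\theta - df\wedge dt \;=\; \omega.
\end{equation*}

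The step I expect to require the most care is verifying that $j$ is genuinely an embedding. Immersion comes for free: the $4\times 3$ differential of $j$ has columns $(1,0,0,0)^T$, $(0,H'_x,0,-f'_x)^T$, $(0,H'_t,1,-f'_t)^T$, and these are linearly dependent exactly when $(H'_x, f'_x) = (0,0)$, which is ruled out. Injectivity, on the other hand, reduces --- since $j$ preserves both $\theta$ and $t$ --- to showing that for each fixed $t$ the plane curve $x \mapsto (H(x,t), -f(x,t))$, which travels from $(-\delta, 0)$ to $(\delta, 0)$, is simple. This is not implied by the abstract hypotheses alone, so at this point I would appeal to the specific structure of the plug data constructed in Section~\ref{sec:pf_plugs} (for instance, arranging $H$ to be strictly monotonic in $x$, in which case the first coordinate of the curve already separates points) to conclude injectivity; compactness of $P$ then upgrades the resulting injective immersion to an embedding.
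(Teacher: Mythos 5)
Your argument follows the same route as the paper: the same explicit map $j(\theta,x,t)=(\theta,H(x,t),t,-f(x,t))$, the same pullback computation $j^*\sigma=\omega$, and the same direct verification of $i_X\omega=0$ and $X\neq 0$. The coordinate computation of $\omega$ and the rank argument (a nonvanishing $2$-form on a $3$-manifold has rank exactly $2$) are both correct, and your observation that immersion already suffices for the ``maximally non-degenerate'' and ``characteristic vector field'' conclusions is a clean way to organize the logic.

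Your caveat about injectivity is a genuine catch, and you are right to flag it. The paper's proof asserts that $j$ is an embedding ``readily follows'' from $(f'_x,H'_x)\neq(0,0)$, but this hypothesis alone only gives that $j$ is an immersion. Since $j$ preserves $\theta$ and $t$, injectivity reduces, as you say, to the injectivity of $x\mapsto(H(x,t),-f(x,t))$ for each fixed $t$, and an immersed arc from $(-\delta,0)$ to $(\delta,0)$ can perfectly well self-intersect; the abstract hypotheses of the lemma do not rule that out. The gap is filled exactly by the conditions \ref{(P2)} ($H'_x\geq 0$) and \ref{(P3)} ($H'_x=0$ only at the two isolated points $p_\pm$) imposed immediately after the lemma: these make $x\mapsto H(x,t)$ strictly increasing for every fixed $t$, so the first coordinate already separates points, as you propose. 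So your resolution is the right one, and the discrepancy is best viewed as the lemma implicitly taking the upcoming plug conditions for granted; if one wanted the lemma to stand alone, one should add a monotonicity hypothesis on $H$ in $x$ (or, more generally, assume directly that $x\mapsto(H(x,t),-f(x,t))$ is injective for each $t$).
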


\begin{proof} Set 
\begin{equation}
\label{eq:j}
j(\theta,x,t)=\big(\theta,H(x,t),t,-f(x,t)\big).
\end{equation}
Clearly, $j=(\id,0)$ near $\p P$, since $f\equiv 0$ and $H\equiv x$
near $\p\Pi$, and $j^*\sigma=\omega$.  It readily follows from the
assumption that $f'_x$ and $H'_x$ do not vanish simultaneously that
$j$ is an embedding. Hence $j^*\sigma$ is maximally
non-degenerate. Finally, a direct calculation shows that $i_X\omega=0$
and $X\neq 0$.
\end{proof}

We require the functions $H$ and $f$ to meet the following conditions:

\begin{itemize}

\item[\reflb{(P1)}{(P1)}] $H\equiv x$ and $f\equiv 0$ near $\p \Pi$,
  and $|f|<a$ and $|H|\leq \delta$ on $\Pi$;

\item[\reflb{(P2)}{(P2)}] $H'_x\geq 0$;

\item[\reflb{(P3)}{(P3)}] the critical points of $H$ are
  $p_\pm=(0,\tau_\pm)$ and $f'_x(p_\pm)\neq 0$, while $H'_x =0$ only
  at these points;

\item[\reflb{(P4)}{(P4)}] $H$ is even in $t$ and $f$ is odd in $t$ for
  any $x$; in particular, $\tau_-=-\tau_+$.

\end{itemize}
Note that these requirements include the conditions of Lemma
\ref{lemma:non-deg}. It is easy to see that such functions $H$ and $f$
do exist for any positive parameters $\delta$, $T$ and~$a$. The level
sets of $H$ and its Hamiltonian flow are shown in Fig.\
\ref{fig:plug}.

\begin{figure}[h!]
\begin{center} 
\def\svgwidth{0.6\columnwidth}
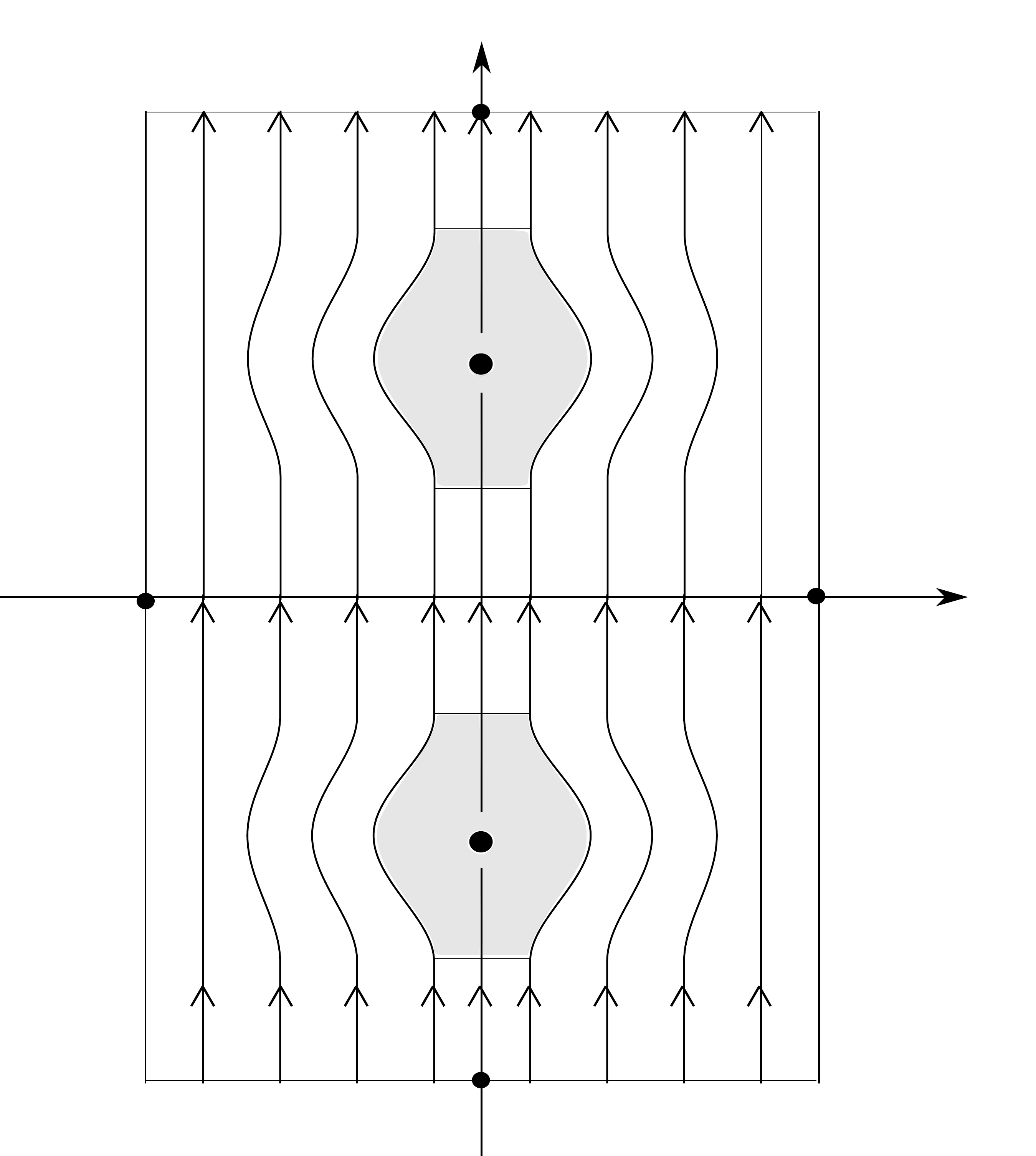
\caption{The rectangle $\Pi$, the levels of $H$ and the critical
  points $p_\pm=(0,\tau_\pm)$, and the neighborhoods $W$ (shaded) from
  the proof of Proposition \ref{prop:plugs}.}
\label{fig:plug}
\end{center}
\end{figure}

\begin{Lemma}
\label{lemma:plug}
Assume that the functions $H$ and $f$ satisfy
\ref{(P1)}--\ref{(P4)}. Then $X=\p/\p t$ near $\p P$. The (local) flow
of $X$ has exactly two integral curves entirely contained in $P$:
these are the periodic orbits $S^1\times \{p_\pm\}$. For every
integral curve which both enters and exits $P$ (i.e., meets the parts
of $\p P$ where $t=\pm T$), the exit and entrance points have the same
$x$ and $\theta$ coordinates. There exist ``trapped'' integral curves,
i.e., the integral curves that enter but do not exit the plug.
\end{Lemma}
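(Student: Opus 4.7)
The plan is to analyze the dynamics of $X = f'_x\,\partial_\theta + \xi_H$ by exploiting two observations: the $(x,t)$-projection of any $X$-orbit is an orbit of the planar Hamiltonian flow $\xi_H$, which preserves $H$ and satisfies $\dot t = H'_x \geq 0$ by \ref{(P2)}; and the $t \leftrightarrow -t$ symmetry imposed by \ref{(P4)}. I would first verify that $X = \partial/\partial t$ near $\partial P$ by direct substitution into \eqref{eq:X}: on the neighborhood where $H \equiv x$ and $f \equiv 0$ from \ref{(P1)}, one has $H'_x = 1$, $H'_t = 0$, and $f'_x = 0$, which kills the $\partial_\theta$ and $\partial_x$ terms and leaves $\partial/\partial t$.

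I would next identify the two distinguished periodic orbits and argue that no others lie entirely in $P$. At $p_\pm$ one has $\xi_H = 0$ while $X(p_\pm) = f'_x(p_\pm)\,\partial_\theta \ne 0$ by \ref{(P3)}, so $S^1 \times \{p_\pm\}$ are periodic $X$-orbits. Conversely, by \ref{(P3)} the inequality $H'_x > 0$ holds off $\{p_\pm\}$, so along any non-constant $\xi_H$-orbit one has $\dot t > 0$ (such an orbit cannot pass through $p_\pm$ in finite time by ODE uniqueness). If the orbit were confined to $[-T,T]$ in $t$, monotonicity would force finite limits $t_\pm$ at $s \to \pm\infty$ at which $\dot t = 0$, pushing the orbit onto $\{p_\pm\}$; this yields the claim.

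For orbits that both enter and exit $P$, I would observe that since $X = \partial_t$ preserves $x$ near $\partial P$, orbits cross the boundary only through $\{t = \pm T\}$. Entering at $(\theta_0, x_0, -T)$ with $x_0 \neq H(p_\pm)$, the $(x,t)$-projection follows the level $\{H = x_0\}$, on which $H'_x > 0$ by \ref{(P3)}; this level is thus a smooth graph $x = x(t)$ on $[-T,T]$ with $x(\pm T) = x_0$, and the orbit exits at $(\theta_1, x_0, T)$, matching the $x$-coordinate automatically. Reparametrizing by $t$ (using $dt = H'_x\, ds$) gives
\[
\theta_1 - \theta_0 = \int_{-T}^{T} \frac{f'_x(x(t),t)}{H'_x(x(t),t)}\,dt.
\]
By \ref{(P4)}, $H(x,-t) = H(x,t)$, so the level $\{H = x_0\}$ is invariant under $t \mapsto -t$; being a graph, this forces $x(-t) = x(t)$. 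Therefore $H'_x(x(t),t)$ is even in $t$, while $f'_x(x(t),t)$ is odd (since $f$ is odd in $t$), making the integrand odd and the integral vanish: $\theta_1 = \theta_0$.

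For the existence of trapped orbits, I would look at the critical level $\{H = H(p_\pm)\}$, which meets $\{t = -T\}$ along $\{x = H(p_\pm)\}$: entering along this level, an orbit cannot exit at $t = T$, because the confinement-to-a-level argument combined with $\dot t > 0$ off $p_\pm$ forces it to asymptote to $p_-$ rather than cross. The main obstacle I foresee is the vanishing of the $\theta$-shift, which hinges on both the $t \mapsto -t$ symmetry from \ref{(P4)} and the graph structure of the relevant level sets (itself a consequence of strict monotonicity of $H$ in $x$); the remaining assertions are routine consequences of planar Hamiltonian dynamics and the boundary conditions from \ref{(P1)}.
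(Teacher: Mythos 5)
Your proof is correct and follows essentially the same approach as the paper's much terser argument: everything reduces to the explicit formula \eqref{eq:X}, the monotonicity of $t$ along $\xi_H$-orbits away from $p_\pm$, and the $t\mapsto -t$ symmetry imposed by \ref{(P4)}. For the third assertion you reparametrize the orbit by $t$ and show the $\theta$-shift integral vanishes by oddness of the integrand, while the paper invokes the flow-level symmetry that reflection in $\{t=0\}$ sends $X$ to $-X$; these are the same observation in different packaging, and your integral formulation has the merit of making the cancellation completely explicit.

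One caveat on the second assertion. Your argument shows that any non-constant $\xi_H$-orbit confined to $\Pi$ must converge to $p_\pm$ at $s\to\pm\infty$, but this does not by itself give ``exactly two integral curves entirely contained in $P$'': it leaves open heteroclinic $X$-trajectories whose $(x,t)$-projection limits to $p_-$ in backward time and to $p_+$ in forward time while remaining inside $\Pi$. For the $H$ the paper actually uses such trajectories do exist --- Remark~\ref{rmk:stability} describes the cylinder $\Sigma=S^1\times\{0\}\times[\tau_-,\tau_+]$ as foliated by characteristics, and the interior leaves of that foliation are precisely such heteroclinics. So ``pushing the orbit onto $\{p_\pm\}$'' should be read as convergence, not as identifying the orbit with one of the periodic ones, and the step ``this yields the claim'' is a leap. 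What your monotonicity argument cleanly establishes is the statement that is actually needed, namely that $S^1\times\{p_\pm\}$ are the only \emph{periodic} orbits of $X$; the stronger literal reading of ``exactly two integral curves'' is an imprecision present in the lemma and in the paper's two-line proof as well, not a defect you introduced.
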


\begin{proof}
  The first two assertions and also the last assertion readily follow
  from the explicit expression for $X$ given by \eqref{eq:X}. The
  third assertion is a straightforward consequence of the fact that,
  by \ref{(P4)}, the reflection in the $\{t=0\}$-plane changes the
  sign of the $(\theta,x)$-component of $X$; see, e.g., \cite[Sect.\
  2.2]{Gi:survey} for more details.
\end{proof}

When the functions $H$ and $f$ meet conditions \ref{(P1)}--\ref{(P4)},
we will refer to $P$ equipped with the form $\omega$ defined by
\eqref{eq:form} together with the embedding $j$ given by \eqref{eq:j}
as a \emph{symplectic plug} or, when the role of the parameters
$(\delta,T,a)$ and/or of the functions $H$ and $f$ is essential, as
the \emph{$(\delta,T,a; H,f)$-plug} or just the
\emph{$(H,f)$-plug}. The key to the proof of Theorem \ref{thm:main} is

\begin{Proposition}
\label{prop:plugs}
For any $\delta>0$ there exists a sequence of $(\delta,T_k,a_k;
H_k,f_k)$-plugs such that $T_k\to 0$ and $a_k\to 0$, and the plugs are
Hamiltonian diffeomorphic with diffeomorphisms equal to $\id$ near $\p
B$.
\end{Proposition}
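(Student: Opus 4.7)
The plan is to construct the sequence of plugs by rescaling a fixed reference plug and to realize the Hamiltonian equivalence of successive plugs via Moser's trick; the core analytic step is a singular Cauchy problem along the characteristic foliation, which will be the main technical obstacle.

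\emph{Plugs and Moser setup.} I would fix a reference $(\delta,T,a;H^\circ,f^\circ)$-plug and, for $\lambda\in(0,1]$, set $H_\lambda(x,t)=H^\circ(x,t/\lambda)$ and $f_\lambda(x,t)=\lambda f^\circ(x,t/\lambda)$ on $[-\delta,\delta]\times[-\lambda T,\lambda T]$. Conditions \ref{(P1)}--\ref{(P4)} are preserved with parameters $(\delta,\lambda T,\lambda a)$ (in particular $|H_\lambda|\le\delta$, $|f_\lambda|<\lambda a$, the critical points are at $(0,\pm\lambda\tau_+)$ with $(f_\lambda)_x\neq 0$, and the parities in $t$ persist), so any $\lambda_k\to 0$ supplies the required $T_k, a_k$. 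To display the Hamiltonian equivalence I would embed every plug into the fixed ambient $B=B_1$ by extending each $j_\lambda$ to the common domain $\hat P=S^1\times[-\delta,\delta]\times[-T,T]$ as $(\theta,x,t,0)$ for $|t|>\lambda T$, which is consistent because of \ref{(P1)}. Let $\Sigma_\lambda=\hat j_\lambda(\hat P)\subset B$ be the resulting closed hypersurfaces, and connect $\Sigma_1$ to $\Sigma_{\lambda_k}$ by a smooth path $\Sigma_s=\hat j_s(\hat P)$, $s\in[0,1]$, coming from a smooth family of plugs $(H_s,f_s)$.

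\emph{Moser reduction.} By the standard Moser recipe for moving coisotropic hypersurfaces, producing a compactly supported time-dependent Hamiltonian $K_s$ on $B$ whose flow carries $\Sigma_0$ to $\Sigma_s$ amounts to specifying $K_s|_{\Sigma_s}$ satisfying a transport equation along the characteristic vector field $X_s=f_x\partial_\theta+\xi_{H_s}$. Pulling back to $\hat P$ via $\hat j_s$ and writing $\omega_s=\hat j_s^*\sigma=d(H_s\,d\theta-f_s\,dt)$, I would start with the natural primitive $\alpha_s=-\partial_s H_s\,d\theta+\partial_s f_s\,dt$ of $-\partial_s\omega_s$ and modify it to $\alpha_s+c_s\,d\theta+du_s$; this is the full primitive freedom, since $[d\theta]$ generates $H^1(\hat P)\cong\Z$. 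Solvability of $i_{v_s}\omega_s=\alpha_s+c_s\,d\theta+du_s$ forces the right-hand side to annihilate $X_s$, which yields the transport equation
\[ X_s(u_s)\;=\;\partial_s H_s\cdot f_x-\partial_s f_s\cdot H_x-c_s\,f_x. \]
Setting $c_s=\partial_s H_s(p_\pm)$---a single value because $t$-evenness of $H_s$ from \ref{(P4)} forces $\partial_s H_s(p_+)=\partial_s H_s(p_-)$---kills the right-hand side on the two closed characteristics $S^1\times\{p_\pm\}$, which is the obvious necessary condition coming from integrating the equation around each closed orbit.

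\emph{The singular Cauchy problem and conclusion.} The hardest step is to solve this transport equation on all of $\hat P$. On the open set of characteristics that enter and exit $\hat P$ (orbits of $X_s$ that follow a level curve of $H_s$ from $\{t=-T\}$ to $\{t=+T\}$), the standard method of characteristics integrates the right-hand side from the incoming boundary with initial value $u_s\equiv 0$ and produces a smooth $u_s$ which agrees with the trivial solution wherever the deformation is trivial. The singularity sits at the closed orbits $S^1\times\{p_\pm\}$: nearby characteristics are trapped on invariant tori foliating a neighborhood of these orbits and do not meet $\partial\hat P$, the flow of $X_s$ there being a skew product of $\theta$-rotation at rate $f_x$ with the Hamiltonian flow of $H_s$ around its Morse centers, with return times blowing up as one approaches the closed orbit. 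I would manage this by arranging the family $(H_s,f_s)$ so that the deformation is trivial in a (shrinking with $s$) neighborhood $W_s$ of $S^1\times\{p_\pm\}$---where we just preserve the local action-angle structure of $H_s$, so that $\partial_s H_s$ and $\partial_s f_s$ vanish identically on $W_s$---forcing the right-hand side to vanish on $W_s$; one can then take $u_s\equiv 0$ on $W_s$, while outside $W_s$ the method of characteristics from the boundary applies, and the $t\to-t$ symmetry of \ref{(P4)} together with matching across $\partial W_s$ produces a smooth global $u_s$. With $u_s$ in hand, extending it to a compactly supported time-dependent Hamiltonian $K_s$ on $B$ (any smooth extension works, since only $K_s|_{\Sigma_s}$ governs how the hypersurface moves) and integrating $\xi_{K_s}$ from $s=0$ to $s=1$ yields the Hamiltonian diffeomorphism $\eta_k$ with $\eta_k(\Sigma_1)=\Sigma_{\lambda_k}$ and $\eta_k\equiv\id$ near $\partial B$.
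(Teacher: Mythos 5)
The core difficulty you correctly identify is the singular Cauchy problem near the closed orbits $S^1\times\{p_\pm\}$, but the way you propose to resolve it cannot work, and the reason is structural. You want to deform $H$ directly so that $T_k\to 0$, and you handle the singularity by "arranging the family $(H_s,f_s)$ so that $\partial_s H_s$ and $\partial_s f_s$ vanish identically on a (shrinking) neighborhood $W_s$ of $S^1\times\{p_\pm\}$." But the critical points $p_\pm(s)=(0,\pm\tau_+(s))$ must move as $s$ varies: by \ref{(P3)} and \ref{(P4)} they are two distinct points with $\tau_+(s)>0$, and they must stay inside $\Pi_k$, so $\tau_+\to 0$ if $T_k\to 0$. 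On the other hand, if $\partial_s H_s\equiv 0$ on a neighborhood of $p_\pm(s)$, then in particular $d(\partial_s H_s)(p_\pm(s))=0$; differentiating $dH_s(p_\pm(s))=0$ in $s$ then gives $\mathrm{Hess}(H_s)(p_\pm(s))\cdot\dot p_\pm(s)=0$, and since the Hessian is nondegenerate this forces $\dot p_\pm(s)=0$. So the two requirements are incompatible, and the right-hand side of your transport equation does \emph{not} vanish near the periodic orbits along any such family. Without that vanishing, the orbits of $\xi_{H_s}$ near $p_\pm$ are closed, and the transport equation acquires an infinite family of solvability conditions (one per closed level curve of $H_s$) which a single constant $c_s$ cannot absorb; your construction does not address these.

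The paper separates the two difficulties precisely to avoid this. In the Moser stage, $H$ is held \emph{fixed} throughout and only $f$ varies (subject to the weaker condition \ref{itm:(F)}, namely that $f_s=f_0+c(s)$ near $p_\pm$, which is enough to make $\partial_s\partial_x f_s\equiv 0$ there); the critical points then do not move, the inhomogeneity vanishes near them, and the solution of the transport equation is obtained by the method of characteristics with a separate smoothness criterion for extending across the slit at $p_\pm$ -- a point your write-up also glosses over. This only achieves $a_k\to 0$. To get $T_k\to 0$ as well, the paper does \emph{not} run Moser again with varying $H$: it applies an explicit Hamiltonian diffeomorphism of $B$ generated by $G=-\kappa\, y t\cdot b(x,t,y)$, a cut-off hyperbolic shear in the $(t,y)$-plane, which sends the image of $j$ directly to the image of $\hj$ with $\hH(x,t)=H(x,e^\kappa t)$ and $\hf(x,t)=e^\kappa f(x,e^\kappa t)$. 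No transport equation needs to be solved for this step. Note also the sign: the shear \emph{increases} $|f|$ by a factor $e^\kappa$ (your proposed scaling $f_\lambda=\lambda f^\circ(\cdot,\cdot/\lambda)$ has the opposite, symplectically unrealizable, power of $\lambda$), which is exactly why the Moser step has to be carried out first to make $\|f\|_{C^0}$ small enough that $e^\kappa\|f\|_{C^0}<a$. You should restructure your argument along these two decoupled steps rather than trying to deform $H$ via Moser.
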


\begin{Remark}
\label {rmk:plugs}
The last assertion of the proposition might, perhaps, require a
clarification. Without loss of generality we may assume that the
sequences $T_k\to 0$ and $a_k\to 0$ are strictly monotone
decreasing. Set 
$$
\Pi_k=[\delta,\,\delta]\times [-T_k,\,T_k],\textrm{ }
P_k=S^1\times\Pi_k\textrm{ and }B_k=P_k\times [-a_k,\,a_k]. 
$$
These are sequences of nested sets. Therefore, we can also view a
sequence of $(\delta,T_k,a_k; H_k,f_k)$-plugs as defined on the same
set $P=P_1$ and $B=B_1$, but with 
$$
\supp (H_k-x)\subset \Pi_k\textrm{ and }\|f_k\|_{C^0}<a_k.
$$ 
Furthermore, denote by $j_k$ the embedding $j$ of $P_k$ into $B_k$
given by \eqref{eq:j}. Likewise, we can interpret $j_k$ as a map from
$P$ to $B$ by extending it from $P_k$ to $P$ as $(\id,0)$. Then the
proposition asserts that, for a suitably chosen sequence of plugs,
there exists a sequence of Hamiltonian diffeomorphisms $\eta_k\colon
B\to B$ equal to $\id$ near $\p B$ and sending the image of $j$ to the
image of $j_k$. (Note that $\eta_k$ is not required, in any sense, to
conjugate the maps $j$ and $j_k$.)
\end{Remark}

\begin{Remark}
\label{rmk:2vs1}
For our purposes it would be sufficient to have such a sequence of
plugs with only one parameter, say $a$, going zero, while the other
one, $T$, remaining fixed. However, we find the (superficially)
stronger version of the proposition stated above more intuitive. Also,
the stronger version comes essentially for free as an easy consequence
of its one-parameter counterpart: to make $T$ and $a$ both small,
rather than just $a$, it suffices to apply a (suitably cut-off)
hyperbolic transformation in the $(t,y)$-plane; see the proof of the
proposition in Section \ref{sec:pf_plugs}.
\end{Remark}

\subsection{Proofs of Theorems \ref{thm:main} and \ref{thm:converge}}
Assuming Proposition \ref{prop:plugs} and postponing its proof to the
next section, let us now prove Theorems \ref{thm:main} and
\ref{thm:converge}.

\begin{proof}[Proof of Theorem \ref{thm:main}]
  The argument follows the line of reasoning outlined in Section
  \ref{sec:outline}.  Set $2n=4$ and let $S^{3}$ be the unit sphere in
  $\R^{4}$ with Darboux coordinates $(p_1,q_1,p_2,q_2)$. Consider the
  parallel transport along the $q_1$-axis in small $\eps>0$, i.e., the
  map
$$
(p_1,q_1,p_2,q_2)\mapsto (p_1,q_1,p_2,q_2)+w, \textrm{ where }
w=(0,\eps,0,0).
$$
After cutting off outside a neighborhood of $S^{3}$, we can view this
map as the Hamiltonian diffeomorphism $\varphi_{\tF}$ generated by the
Hamiltonian $\tF=\eps\chi\cdot p_1$, where $\chi$ is a cut-off
function equal to one near $S^{3}$, on a shell which contains both
$S^3$ and $S^3+w$.

For $\eps>0$ small, the only leafwise intersections of $S^{3}$ and
$\varphi_{\tF}(S^{3})$ are the two points $z^\pm$ on the unit circle
$S$ in the $(p_1,q_1)$-plane, located near the North Pole,
$(1,0,0,0)$, and the South Pole, $(-1,0,0,0)$, of $S^{3}$ and such
that
$$
\{\varphi_{\tF}(z^+),\varphi_{\tF}(z^-)\}=S\cap (S+w).
$$
More precisely, $z^\pm=(\pm \sqrt{1-\eps^2/4}, -\eps/2,0,0)$.

To see that there are no other leafwise intersections, note first that
$z$ is a leafwise intersection if and only if $z$ and
$\varphi_{\tF}(z)=z+w$ lie on the same Hopf circle. This Hopf circle
is then the unit circle in the real 2-plane spanned by $z$ and $z+w$,
obviously containing $w$. But this plane must also be a complex line
in $\R^{4}=\C^2$, and hence it must also contain $iw$. This forces the
plane to be the $(p_1,q_1)$-coordinate plane and $z$ to be one of the
points $z^\pm$.

Fix now an embedding $\psi$ (or $\psi^+$) of $S^1$ into a small
neighborhood of the North Pole in the intersection of $S^3$ with the
hyperplane $q_1=0$, sending $\theta=0$ to the North Pole.  For a
sufficiently small $\delta>0$, we can extend this embedding to a
symplectic embedding $\Psi$ (or $\Psi^+$) of $S^1\times
[-\delta,\delta]$, equipped with the symplectic form $\sigma=dx\wedge
d\theta$, into $S^3\cap\{q_1=0\}$.  Such an extension exists because
$S^3\cap\{q_1=0\}$ is symplectic near the North Pole. (Note also that
$\Psi$ does not pass through $z^+$, since $z^+$ lies in the
$q_1=-\eps/2$ plane.)

Furthermore, let $U=U^+$ be a neighborhood of $Y=\Psi\big( S^1\times
[-\delta,\delta]\big)$ in $\R^4$. We assume that $U$ is so small that
the following conditions are met:
\begin{itemize}
\item $z^+\not\in U$;
\item $\varphi_{\tF}^\tau(U)$ is contained in the region where
  $\chi\equiv 1$ for all $\tau\in [0,\,1]$;
\item $U$ is displaced by $\varphi_{\tF}$, i.e., $U\cap
  (U+w)=\emptyset$;
\item $U$ does not intersect the shifted sphere
  $\varphi_{\tF}(S^3)=S^3+w$.
\end{itemize}
The last two conditions require $U$ to be ``narrow and low''.

When the parameters $T$ and $a$ of the plug are small, we can
symplectically embed $B=P\times [-a,\,a]$ into $U$ extending the
embedding $\Psi$ of $S^1\times [-\delta,\,\delta]\times
\{0\}\times\{0\}$ to $B$ and sending $P$ into $V=U\cap S^3$. Since the
embedding is symplectic, it sends the characteristics in $P$ to the
characteristics in $V\subset S^3$. In particular, the $t$-axis in $P$
(i.e., the line $x=0=\theta$) matches the characteristic through the
North Pole (or, equivalently, through $z^+$) in $S^3$. From now on, we
identify $P$ and $B$ with their images under this embedding.

Such an embedding does exist because $\Psi$ is symplectic. To be more
precise, due to this assumption and a variant of the symplectic
neighborhood theorem, we may without loss of generality assume that
$U$ has the form
$$
U=
S^1\times (-\delta',\,\delta')\times (-T',\,T') \times (-a',\,a') 
$$ 
with coordinates $(\theta,x',t',y')$ and the symplectic form
$$
\sigma'=dx'\wedge d\theta  + dy'\wedge dt'
$$ 
and that $V\subset U$ is given by the condition $y'=0$ and the North
Pole is the origin $(0,0,0,0)$ in these coordinates. Furthermore, we
can also assume that $\Psi(\theta,x)=(\theta,x,0,0)$.  (Hence,
$\delta<\delta'$.) Now the required embedding, pulling back $\sigma'$
to $\sigma$, is
$$
(\theta,x,t,y)\mapsto (\theta,x',t',y') = (\theta,x,\kappa t,y/\kappa)
$$
where $\kappa$ is fixed and the positive parameters $T$ and $a$ are
chosen so small that and $\kappa T< T'$ and $a/\kappa< a'$. (In fact,
since we are free to chose any $\kappa$, it would be sufficient to
vary only one of the parameters $a$ and $T$. For instance, having $T$
fixed, we can take $\kappa=T'/2T$ and then $a$ so small that
$a/\kappa=2aT/T'<a'$.)

It is essential for what follows that in this construction $\delta'$,
and hence $\delta$, can be taken independent of $\eps$, while $T'$ and
$a'$ (and thus $T$ and $a$ or at least one of these parameters) are
bounded from above by some functions of $\eps$.

Next, we repeat this process starting with embeddings $\psi^-$ of
$S^1$ and $\Psi^-$ of $S^1\times [-\delta,\,\delta]$ into a
neighborhood of the South Pole in $S^3\cap\{q_1=0\}$ and passing
through the South Pole. As a result, we have an embedding of $B$ into
a small neighborhood $U^-$ of the band $Y^-=\Psi^-\big(S^1\times
[-\delta,\,\delta]\big)$ in $\R^4$, which sends $P$ into $V^-=U^-\cap
S^3$.

Replacing $P^\pm=P$ by $Q^\pm=j(P)$ in both neighborhoods $U^\pm$, we
obtain a new hypersurface $M$ which differs from $S^3$ only within
$U^\pm$ and is $C^0$-close to $S^3$; see Fig.\ \ref{fig:shift}. We
claim that there are no leafwise intersections of $M$ and
$\varphi_{\tF}(M)$.

To prove this, we need to show that for any characteristic $\Cc$ on
$M$, the sets $\Cc$ and $\varphi_{\tF}(\Cc)=\Cc+w$ do not
intersect. Observe that $U^\pm$ have been chosen to be so small that
the sets $\varphi_{\tF}(U^\pm)=U^\pm+w$ do not intersect $M$.  In
particular, by our choice of $U^\pm$, we have $M\cap (M+w)= S^3\cap
(S^3+w)$, which is the intersection of $S^3$ with the hyperplane
$q_1=\eps/2$. Therefore, all leafwise intersections on $M$ must be
outside $U^\pm$. Every characteristic $\Cc$ on $M$ comprises
(possibly) some arcs of a Hopf circle and (possibly) characteristics
in $Q^\pm$. We emphasize that, by Lemma \ref{lemma:plug}, the arcs lie
on the same Hopf circle. As a consequence, a leafwise intersection
$z\in\Cc$ can only be located on an arc of a Hopf circle and must also
be a leafwise intersection for $S^3$. Hence $z=z^\pm$. These two
points lie on the same characteristic $\Cc_0$ on $M$. This
characteristic is the union of an arc of the unit circle in the
$(p_1,q_1)$-plane contained in the $q_1>0$ half-plane and two
characteristics in $Q^\pm$, which are ``trapped'', i.e., enter $Q^\pm$
but never leave. Clearly, $\Cc_0$ does not intersect
$\varphi_{\tF}(\Cc_0)=\Cc_0+w$, and hence $z^\pm$ are not leafwise
intersections of $M$ with $\varphi_{\tF}(M)$.

Let us now consider a sequence $\eps_k\to 0$, set
$\tF_k=\eps_k\chi\cdot p_1$, and carry out the above construction for
every $k$. Namely, with embeddings $\Psi^\pm$ fixed (independent of
$k$), we can, as above, take a sequence of smaller and smaller
neighborhoods $U^\pm_k$ of the bands $Y^\pm$ and a nested sequence of
symplectic embeddings of $B_k=P_k\times [-a_k,a_k]$ into $U^\pm_k$
with $T_k\to 0$ and $a_k\to 0$. We again identify $B_k$ and $P_k$ with
their images $B^\pm_k$ and, respectively, $P^\pm_k$ in
$U^\pm_k$. Denote by $Q^\pm_k$ the images of $j_k(P_k)$ in $B^\pm_k$
and by $M_k$ the hypersurface obtained from $S^3$ by replacing
$P_k^\pm$ by $Q^\pm_k$. By construction, $M_k$ and
$\varphi_{\tF_k}(M_k)$ have no leafwise intersections.

By Proposition \ref{prop:plugs} and Remark \ref{rmk:plugs}, this can
be done so that there exist Hamiltonian diffeomorphisms $\eta_k^\pm$
of $B_1^\pm$, equal to $\id$ near the boundary and sending $Q^\pm_1$
to $Q^\pm_k\cup \big(P_1^\pm\setminus P_k^\pm\big)$. We extend
$\eta_k^\pm$ to a Hamiltonian diffeomorphism $\eta_k$ of $\R^4$ as the
identity map outside $B^\pm_1$. Setting $M=M_1$, we have
$\eta_k(M)=M_k$. Let $F_k=\tF_k\circ \eta_k$. Then $M$ and
$\varphi_{F_k}(M)$ have no leafwise intersections. Furthermore, $\|
F_k\|_{C^0}=\eps_k\to 0$ and $\supp F_k=\supp\chi$. Thus we can ensure
that $\supp F_k$ is contained in an arbitrarily small (but fixed,
i.e., independent of $k$) neighborhood of $S^3$ and that $M$ is
$C^0$-close to $S^3$.
\end{proof}

\begin{Remark}[Stability]
\label{rmk:stability}
The hypersurface $M$ constructed in the proof of Theorem
\ref{thm:main} is not stable in the sense of \cite{HZ}. Indeed, one of
the equivalent definitions of stability is that there exists a
one-form $\alpha$ on $M$ non-vanishing on the characteristic foliation
and such that $\ker\omega\subset \ker d\alpha$, where $\omega$ is the
restriction of the ambient symplectic form to $M$; see, e.g.,
\cite{Gi:coiso,EKP} for a discussion of stability. Then it suffices to
show that the plug $(P,\omega)$ with $\omega$ given by \eqref{eq:form}
is not stable in the sense of this definition. Arguing by
contradiction, assume that such a form $\alpha$ exists. The curves
$\gamma_\pm=S^1\times \{p_\pm\}$ form the boundary of the cylinder
$$
\Sigma=S^1\times \{0\}\times [\tau_-,\,\tau_+]\subset P
$$ 
foliated by the characteristics of $\omega$. Hence, due to the
condition $\ker\omega\subset \ker d\alpha$, we have
$d\alpha|_\Sigma=0$. Therefore, by Stokes' theorem,
$$
\int_{\gamma_-}\alpha=\int_{\gamma_+}\alpha,
$$
where $\gamma_\pm$ are oriented by fixing an orientation of $S^1$. On
the other hand, the direction of the vector field $X$ defined by
\eqref{eq:X} matches this orientation on one of the curves
$\gamma_\pm$ and the opposite orientation on the other.  Thus, since
$\alpha(X)\neq 0$ everywhere, these two integrals are non-zero and
have opposite signs. Note that, as a consequence, none of the
hypersurfaces $M_k$ is stable.  Furthermore, the characteristic
foliation of $M_k$ (or $M$) is not totally geodesic since for
hypersurfaces this requirement is equivalent to stability; see
\cite{Us}.
\end{Remark}

\begin{proof}[Proof of Theorem \ref{thm:converge}] It is clear that
  the sequence of hypersurfaces $M_k$ constructed in the proof of
  Theorem \ref{thm:main} $C^0$-converges to the round sphere
  $S^{3}$. The characteristic foliation of $M_k$ has non-compact
  leaves, e.g., the characteristics which enter and remain trapped in
  the plugs. Thus this foliation is not homeomorphic to the
  characteristic foliation on $S^{3}$ since the latter is just the
  Hopf fibration. Finally, working, say, in a neighborhood of the
  North Pole, we can take one of the two characteristics $j_k(S^1
  \times p_{\pm})$ as $L_k$, where $p_\pm$ are the critical points of
  $H$. These characteristics $C^\infty$-converge to the embedded
  circle $\Psi(S^1\times \{0\})\subset S^{3}$ which is clearly nowhere
  tangent to the Hopf fibration and intersects every fiber at at most
  one point.
\end{proof}

\begin{Remark}
  When $2n\geq 6$, we could have also used, with only minor
  modifications to the proof, more elaborate plugs from the
  constructions of counterexamples to the Hamiltonian Seifert
  conjecture; see, e.g., \cite{Gi:survey} and references therein. This
  would result in hypersurfaces $M$ without closed characteristics and
  leafwise intersections.
\end{Remark}

\section{Proof of Proposition \ref{prop:plugs}}
\label{sec:pf_plugs}

Throughout the proof, we fix positive parameters $\delta$, $T$ and
$a$. It is sufficient to find a sequence of functions $H_k$ and $f_k$
satisfying \ref{(P1)}--\ref{(P4)} and such that
\begin{enumerate}
\item[\reflb{itm:(i)}{(i)}] $\|f_k\|_{C^0}\to 0$ and
\item[\reflb{itm:(ii)}{(ii)}] $\supp (H_k-x)\subset
  [\delta,\,\delta]\times [-T_k,\,T_k]$ for some sequence $T_k\to 0$,
\end{enumerate}
and that the resulting plugs are Hamiltonian diffeomorphic. The last
requirement means that, in the notation from Section \ref{sec:plugs},
there exists a sequence of Hamiltonian diffeomorphisms of $B$ equal to
$\id$ near $\p B$ and sending $Q=j_1(P)$ to $Q_k=j_k(P)$; see Remark
\ref {rmk:plugs}.

We start by fixing a function $H$ and considering a family of
functions $f=f_s(x,t)$, depending smoothly on $s\in [0,\,1]$, and
meeting conditions \ref{(P1)}--\ref{(P4)} for every $s$. Moreover, we
require \ref{(P1)} to hold uniformly in $s$, i.e., that $f_s\equiv 0$
for all $s$ on some neighborhood of $\p \Pi$ independent of
$s$. Furthermore, we assume that
\begin{itemize}
\item[\reflb{itm:(F)}{(F)}] $f_s(x,t)=f_0(x,t)+c(s)$ near the points
  $p_\pm=(0,\tau_\pm)$, where $c(s)$ is a function of $s$ only.
\end{itemize}
This condition plays a crucial role in the proof. Note that here
again, \ref{itm:(F)} is required to hold uniformly in $s$, i.e., each
of the points $p_\pm$ has a neighborhood independent of $s$ where
$f_s=f_0+c(s)$.  (In what follows, we will always require conditions
of this type to hold uniformly, without mentioning this specifically
again.)

We will show that the resulting family of plugs is Hamiltonian
diffeomorphic. This is clearly enough to construct a sequence of
Hamiltonian diffeomorphic plugs satisfying \ref{itm:(i)} and having
$H$ fixed.  Then, as was pointed out in Remark \ref{rmk:2vs1}, it is
easy to have \ref{itm:(ii)} also satisfied by applying a cut-off
hyperbolic transformation in the $(t,y)$-plane. We will carry out the
argument in detail at the end of the proof.

Denote by $j_s$ the embedding $P\to B$ given by \eqref{eq:j} for
$(H,f_s)$, and set
$$
\omega_s=j_s^*\sigma=d(H\,d\theta-f_s\,dt).
$$
The key step of the proof is the following result showing that these
forms are diffeomorphic to each other:

\begin{Lemma}
\label{lemma:psi}
There exists an isotopy $\psi_s$, $s\in [0,\,1]$, of $P$ equal to
$\id$ on a neighborhood of $\p P$ and such that
$\psi_s^*\omega_s=\omega_0$.
\end{Lemma}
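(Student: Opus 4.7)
The plan is to apply Moser's trick, seeking $\psi_s$ as the time-$s$ flow of a vector field $Z_s$ on $P$ that vanishes near $\partial P$. Writing $\omega_s = d\alpha_s$ with $\alpha_s = H\,d\theta - f_s\,dt$, so that $\dot\omega_s = d(-\dot f_s\,dt)$, the condition $\frac{d}{ds}\psi_s^*\omega_s = 0$ reduces via Cartan's formula to $d(i_{Z_s}\omega_s - \dot f_s\,dt) = 0$. Since $\omega_s$ has rank two with $\ker\omega_s = \R\langle X_s\rangle$ by Lemma \ref{lemma:non-deg}, I would look for $Z_s$ together with a function $h_s\in C^\infty(P)$ satisfying
\begin{equation*}
i_{Z_s}\omega_s = \dot f_s\,dt + dh_s, \qquad h_s\equiv 0 \text{ near } \partial P.
\end{equation*}
Contracting both sides with $X_s$ and using \eqref{eq:X} gives the equivalent scalar transport equation $X_s h_s = -\dot f_s\,H'_x$ on $P$, with the same boundary condition on $h_s$. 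Any such $h_s$ determines $Z_s$ uniquely modulo $\R X_s$, and $Z_s$ vanishes near $\partial P$ automatically because both $\dot f_s$ and $h_s$ do.

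Since the ingredients $\dot f_s$, $H$ and $H'_x$ are $\theta$-independent, it suffices to find a $\theta$-independent solution, i.e., a function $\tilde h_s$ on $\Pi$ with $\xi_H\tilde h_s = -\dot f_s\,H'_x$. The characteristics of $\xi_H$ are the level curves of $H$, and on the level curve entering the plug at $(x_0,-T)$ I would set
\begin{equation*}
\tilde h_s(x,t) = -\int_{-T}^{t}\dot f_s\bigl(x(t'),t'\bigr)\,dt',
\end{equation*}
with $t'\mapsto x(t')$ parametrizing the level curve through $(x,t)$. The requisite vanishing of $\tilde h_s$ at the exit boundary $t=T$ follows from the symmetry provided by \ref{(P4)}: $H$ is even in $t$, so the level curves are $R$-invariant and $x(-t')=x(t')$, while $\dot f_s$ is odd in $t$; hence the integrand is odd in $t'$ and the full integral from $-T$ to $T$ vanishes. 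A parallel check on the lateral boundary $\{x=\pm\delta\}$ — where level curves lie entirely in the region $H\equiv x'$ in which $\dot f_s\equiv 0$ — gives $\tilde h_s\equiv 0$ on a full neighborhood of $\partial\Pi$.

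The main obstacle is verifying that $\tilde h_s$ is smooth on all of $\Pi$, including at the critical points $p_\pm$ and on the separatrices through them. The $t$-parametrization of level curves degenerates at $p_\pm$, and on a priori the integral formula above may give different limits as one approaches a separatrix from either side. This is precisely where condition \ref{itm:(F)} is essential. On a neighborhood $W$ of $\{p_\pm\}$ one has $\dot f_s\equiv \pm c'(s)$ identically; using $H'_x = \xi_H(t)$ the transport equation restricts on $W$ to
\begin{equation*}
\xi_H\bigl(\tilde h_s \pm c'(s)\,t\bigr) = 0,
\end{equation*}
so $\tilde h_s\pm c'(s)\,t$ is $\xi_H$-invariant on $W$, hence a smooth function $F_s\circ H$ of $H$ alone. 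Matching this local normal form with the global integral on the complement of $W$ amounts to showing that the two determinations of $F_s\circ H$ coming from the two sides of each separatrix agree; this reduces, via the $R$-symmetry and the local constancy of $\dot f_s$ guaranteed by \ref{itm:(F)}, to an explicit cancellation of the contributions of $\int\dot f_s\,dt$ near $p_\pm$. This is the ``singular Cauchy problem'' referred to in Section \ref{sec:outline}.

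Once $\tilde h_s$ has been shown to be smooth on $\Pi$, it lifts to $h_s\in C^\infty(P)$; the vector field $Z_s$ extracted from Moser's identity is then smooth on $P$ and vanishes near $\partial P$, and the flow $\psi_s$ of $Z_s$ is the desired isotopy satisfying $\psi_s^*\omega_s = \omega_0$.
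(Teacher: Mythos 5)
Your overall strategy is the same as the paper's -- Moser's homotopy method reducing to a linear transport equation along $\xi_H$ on $\Pi$, with \ref{(P4)} handling the boundary conditions and \ref{itm:(F)} handling the behavior near $p_\pm$ -- though the parametrization differs: you solve for a primitive $h_s$ of $i_{Z_s}\omega_s-\dot f_s\,dt$ (equation $\xi_H\tilde h_s=-\dot f_s\,H'_x$), while the paper takes the ansatz $Z_s=g_s\,\partial/\partial\theta$ and solves $\xi_H g_s=(\dot f_s)'_x$. These are equivalent formulations, related by $dh_s=-g_s\,dH-\dot f_s\,dt$, and the preliminary reductions (contracting with $X_s$, observing $\theta$-independence suffices, checking vanishing of the RHS near $\partial\Pi$ and the parity in $t$) are all correct.

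The gap is at the step ``so $\tilde h_s\pm c'(s)\,t$ is $\xi_H$-invariant on $W$, \emph{hence} a smooth function $F_s\circ H$ of $H$ alone.'' That ``hence'' is precisely what must be proven: a function constant on the level curves of $H$ in a punctured neighborhood of the degenerate critical point $p_\pm$ is not automatically a smooth function of $H$ at $p_\pm$, because the characteristics collapse there and the implicit function $x_c(t')$ is not smooth in $(c,t')$ near $p_\pm$. The paper's proof devotes its main technical passage exactly to this point: it fixes a cross-section $\Gamma$ transverse to the levels just below $p_\pm$, shows $H|_\Gamma$ is a diffeomorphism using $H'_x>0$ on $\Gamma$ from \ref{(P2)}--\ref{(P3)}, and then exhibits the extension as the smooth composition $g|_\Gamma\circ(H|_\Gamma)^{\mathrm{inv}}\circ H$. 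Your sketch does not contain this argument, and the subsequent appeal to ``an explicit cancellation of the contributions of $\int\dot f_s\,dt$'' and ``the two determinations ... coming from the two sides of each separatrix'' does not substitute for it; under \ref{(P2)}--\ref{(P3)} the level curves of $H$ are graphs $x=x_c(t)$ with no closed loops, so there is no two-sided matching problem, and the difficulty is genuinely the smoothness of the extension \emph{at} $p_\pm$, not a monodromy or matching issue across a separatrix. Until that smoothness criterion is established, the proof is incomplete.
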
 

\begin{proof}
  We use Moser's homotopy method. It suffices to find an $s$-dependent
  vector field $Z_s$ generating $\psi_s$. As is easy to see, $Z_s$
  then must satisfy the equation
\begin{equation}
\label{eq:Z}
L_{Z_s}\omega_s= d\left(\frac{\p f_s}{\p s}\right)\wedge dt .
\end{equation}
We look for a solution $Z_s$ of the form
$$
Z_s=g_s\frac{\p}{\p \theta},
$$
where $g_s$ is an (unknown) function on $\Pi=[-\delta,\,\delta]\times
[-T,\,T]$ depending on the parameter $s\in [0,\,1]$ and vanishing near
$\p \Pi$.

Calculating the right and the left hand sides of \eqref{eq:Z}
explicitly, we arrive at the equation
$$
\big((g_s)'_t H'_x-(g_s)'_x H'_t\big)\, dx \wedge dt 
= F_s\, dx \wedge dt,
$$
where
$$
F_s=\frac{\p }{\p s}\left(\frac{\p f_s}{\p x}\right).
$$
Equivalently, this equation can be rewritten as
\begin{equation}
\label{eq:g}
L_{\xi_H} g_s= F_s,
\end{equation}
where $\xi_H$ is the Hamiltonian vector field of $H$ on $\Pi$.  We
need to find a solution $g_s$ of \eqref{eq:g} which vanishes on a
neighborhood of $\p \Pi$ for all $s$. The right hand side $F_s$ of
\eqref{eq:g} has the following properties:
\begin{itemize}
\item $F_s$ is odd in $t$ for every $s$ and $x$ (by \ref{(P4)});
\item $F_s$ vanishes for all $s$ on some neighborhoods of the zeros
  $p_\pm$ of $\xi_H$ (by \ref{itm:(F)});
\item $F_s$ vanishes for all $s$ on a neighborhood of the boundary $\p
  \Pi$.
\end{itemize}
These are the only features of $F_s$ needed for the proof. (In
contrast, a rather specific form of $\xi_H$ determined by
\ref{(P1)}--\ref{(P4)} is essential.)

We will solve equation \eqref{eq:g} using the method of
characteristics; see, e.g, \cite[Chap.\ 2]{Ar}. A somewhat non-obvious
point is then the existence and smoothness of a solution near the
zeros $p_\pm$ of $\xi_H$. Thus, before turning to the task of solving
the equation, let us state and prove the existence and smoothness
criterion we will use. (We are assuming below that $H$ is
$C^\infty$-smooth.)

Let $W$ be a small neighborhood of one of the points $p_\pm$. For the
sake of simplicity, let us also assume that $W$ is cut out by two
horizontal lines $t=\const$ at the top and the bottom and by two
levels of $H$ on its sides; see Fig.\ \ref{fig:plug} and Fig.\
\ref{fig:W}. In particular, the intersection of $W$ with every
integral curve of $\xi_H$ is connected. Furthermore, assume that $g$
is a function on the slit neighborhood
$\accentset{\circ}{W}=W\setminus \big(\{0\}\times\{t\geq
\tau_\pm\}\big)$ constant along every integral curve of $\xi_H$ in
this set. (We think of $g$ as a possibly non-smooth solution of the
homogeneous equation $L_{\xi_H} g = 0$ in $W$.)  Finally, let us
require the restriction of $g$ to the cross section
$\Gamma=W\cap\{t=t_0\}$ to be $C^k$-smooth, where $t_0<\tau_\pm$ is
close to $\tau_\pm$; see Fig.\ \ref{fig:W}. (Thus $\Gamma$ intersects
all levels of $H$ in $W$.) Then we claim that $g$ extends to a
$C^k$-smooth function on $W$. Such an extension is automatically
unique and constant along the levels of $H$ in $W$. In particular, $g$
can be thought of as a true solution of the homogeneous equation in
$W$.

\begin{figure}[h!]
\begin{center} 
\def\svgwidth{0.4\columnwidth}
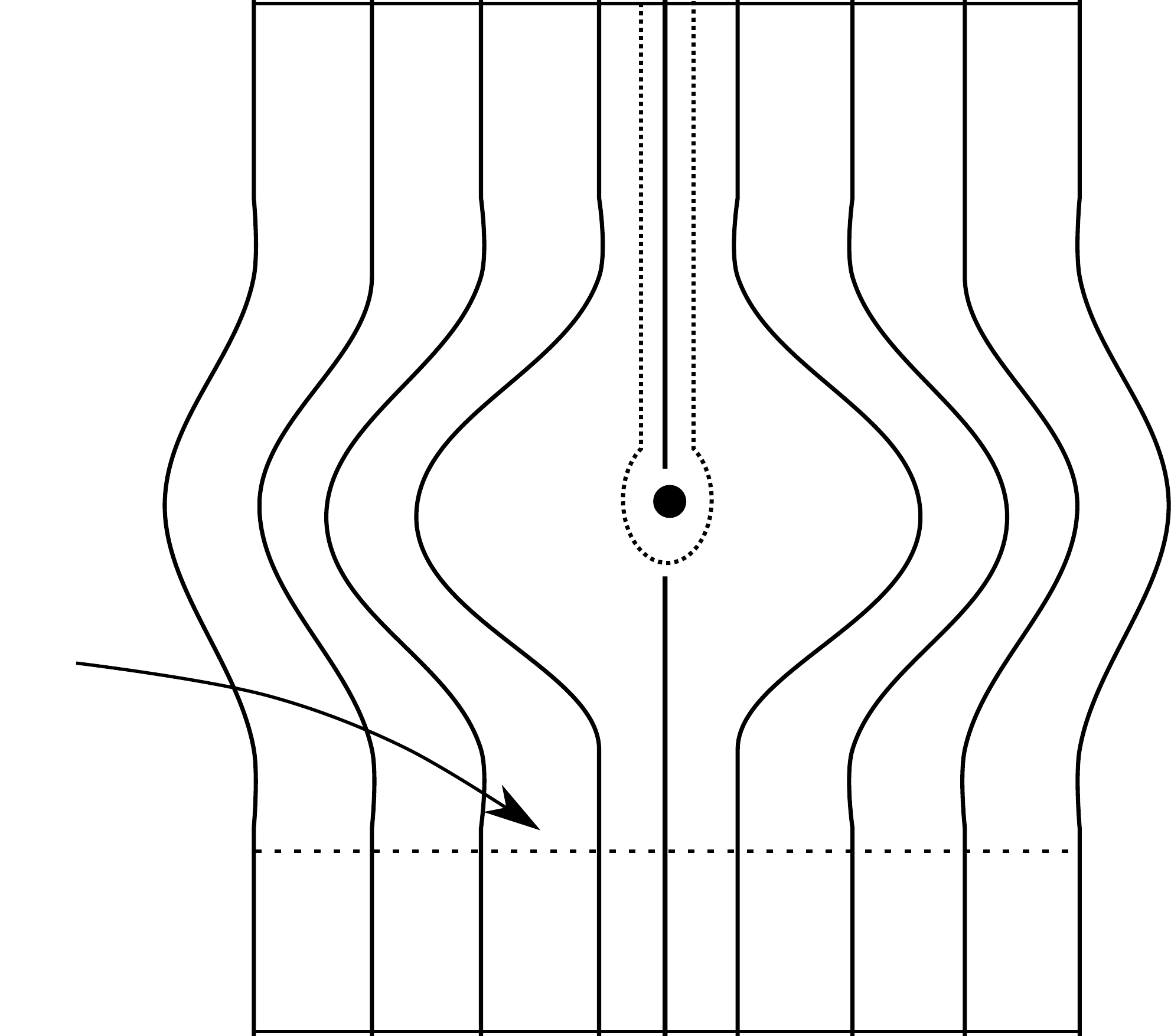
\caption{Neighborhood $W$, the slit $\{0\}\times\{t\geq\tau_{\pm}\}$
  and the cross section $\Gamma$.}
\label{fig:W}
\end{center}
\end{figure}

Let us prove this claim. By definition, the value of the extension at
a point $z\in W$ is equal to the value of $g$ at the point $z'$ where
the level of $H$ through $z$ intersects $\Gamma$. (A continuous
extension, if it exists, must have this property. Note also that,
since we only need to extend $g$ to the slit $\{0\}\times\{t\geq
\tau_\pm\}$, we could have just set $g\equiv g(0,t_0)$ on the cut.)
To see that the extension defined in this way is $C^k$-smooth, we
observe first that, by \ref{(P2)} and \ref{(P3)}, $H'_x>0$ along
$\Gamma$. Thus the restriction of $H$ to $\Gamma$ is a diffeomorphism
onto its image $H(\Gamma)=H(W)$. Let us denote by
$(H|_\Gamma)^{\mathrm{inv}}$ the inverse diffeomorphism. This map is
as smooth as $H$, and hence $C^\infty$-smooth. Now it is easy to see
that the extension is given by $g|_\Gamma\circ
(H|_\Gamma)^{\mathrm{inv}}\circ H$ on $W$. Indeed, this composition is
equal to $g$ on $\accentset{\circ}{W}$. Furthermore, every function in
the composition is at least $C^k$-smooth. Hence the extension is also
$C^k$-smooth on~$W$.

Of course, there is also an obvious parametric version of this
result. Namely, now the function $g$ depends on a parameter $s\in
[0,\,1]$ and $g|_\Gamma$ is $C^k$ in $x$ and $s$. Then $g$ is $C^k$ on
$W\times [0,\,1]$.

With these observations in mind we are ready to solve \eqref{eq:g}. We
do this in several steps. For the sake of simplicity, we suppress the
parameter $s$ in the notation.

We set the ``initial condition'' $g\equiv 0$ on the bottom part
$\{t=-T\}$ of $\p\Pi$ in the Cauchy problem \eqref{eq:g}. The method
of characteristics guarantees the existence of a smooth solution $g$
on the cut domain $\Pi\setminus \big(\{0\}\times [\tau_-,\,T]\big)$;
see, e.g., \cite[Chap.\ 2]{Ar}.  By the above criterion, since
$F_s\equiv 0$ near $p_-$, the solution $g$ extends to the union of the
cut domain with a neighborhood of $p_-$. Applying the method of
characteristics again with the initial condition on the horizontal
line just above $p_-$, we can extend $g$ to the domain $\Pi\setminus
\big(\{0\}\times [\tau_+,\,T]\big)$. Next, again, by the smoothness
criterion, the solution extends to a neighborhood of $p_+$ and then by
the method of characteristics to the entire domain $\Pi$.

Since $\xi_H=\p/\p t$ and $F\equiv 0$ near $\p\Pi$, it is clear that
the resulting solution $g$ vanishes (for all $s$) near $\p \Pi$ except
possibly the upper part $\{t=T\}$ of the boundary. Recall, however,
that $H$ is even in $t$ by \ref{(P4)} and $F$ is odd in $t$. It
readily follows then that the solution $g$ is even in $t$, and hence
it also vanishes near the upper part of the boundary. \end{proof}

Our next step is to show that there exists an $s$-dependent
Hamiltonian $K_s$ on $B$ with $j_s(P)=\varphi_{K}^s(j(P))$. In fact,
we will find $K_s$ such that
\begin{equation}
\label{eq:K}
\varphi_K^s|_{j(P)} =j_s\psi_s
\colon P\to B
\end{equation}
In addition, $K_s$ is required to vanish uniformly in $s$ near $\p B$.

Consider the vector field
$$
v_s=\frac{\p}{\p s} j_s\psi_s
$$
along the embedding $j_s\psi_s$. Then \eqref{eq:K} is satisfied if and
only if
$$
dK_s=
-i_{v_s}\sigma
$$
along $j_s\psi_s$, i.e., at every point of $TB|_{j_s\psi_s(P)}$. It is
not very difficult to show that $K_s$ with this property exists if and
only if the form
$$
\beta_s=(j_s\psi_s)^*i_{v_s}\sigma
$$
is exact on $P$.

Let us calculate this form explicitly. Consider a primitive $\lambda$
of $\sigma$ on $B$, e.g., we can take $\lambda=x\,d\theta+y\,dt$. Then
extending $v_s$ to an $s$-dependent vector field on the entire domain
$B$, we have
\begin{equation}
\label{eq:beta}
\beta_s=(j_s\psi_s)^*L_{v_s}\lambda-(j_s\psi_s)^* d(\lambda(v_s)).
\end{equation}
Hence it suffices to show that the form
$$
\alpha_s=(j_s\psi_s)^* L_{v_s}\lambda 
= \frac{d}{d s}(j_s\psi_s)^*\lambda
$$
is exact on $P$. By Lemma \ref{lemma:psi}, $\psi_s^*j_s^*\, d\lambda$
is independent of $s$, and hence $\alpha_s$ is closed for all $s$. To
see that this form is exact it suffices to show that its integral over
any circle $S^1\times \{(x,t)\}$ is zero. But $\alpha_s\equiv 0$ near
$\p P$ because $j_s=(0,\id)$ and $\psi_s=\id$ near $\p P$. Thus the
integral is zero when $(x,t)$ is near $\p P$ (and therefore for any
circle) and the form is exact.

Finally, to have $K_s$ vanishing near $\p B$ (for all $s$) it suffices
to show that $\beta_s$ has a primitive vanishing near $\p P$. We have
already seen that $\alpha_s$, the first term in \eqref{eq:beta}, has
such a primitive since it vanishes near $\p P$. A primitive of the
second term in \eqref{eq:beta} is $(j_s\psi_s)^* \lambda(v_s)$. This
function is also identically zero near $\p P$ because $j_s=(0,\id)$
and $\psi_s=\id$, and hence $v_s\equiv 0$, near the boundary.

Now, let $(H,f)$ be a pair of functions meeting conditions
\ref{(P1)}--\ref{(P4)}. As is easy to see, we can always find a
function $\tf$ with arbitrarily small $C^0$-norm on $\Pi$ and a family
$f_s$ connecting $f_0=f$ and $f_1=\tf$, satisfying condition
\ref{itm:(F)} and such that the pair $(H,f_s)$ also meets the plug
requirements \ref{(P1)}--\ref{(P4)} for all $s$ (uniformly in $s$). It
follows that $(H,\tf)$-plug is Hamiltonian diffeomorphic to the
original $(H,f)$-plug.

In other words, given an $(H,f)$-plug, we can find an $(H,\tf)$-plug,
Hamiltonian diffeomorphic to the original plug, with arbitrarily
$C^0$-small $\tf$. To finish the proof of Proposition
\ref{prop:plugs}, we need to find an $(\hH,\hf)$-plug with both
$\|\hf\|_{C^0}$ and the $t$-component of $\supp (\hH-x)$ arbitrarily
small, which is Hamiltonian diffeomorphic to the $(H,f)$-plug.

We have $\|\tf\|_{C^0}\leq \eps$, where $\eps>0$ can be assumed to be
arbitrarily small, and
$$
\supp (H-x)\subset P'=S^1\times \Pi', \textrm{ where } \Pi'=[-\delta',\,\delta']\times
[-T',\,T']
$$
for some positive $\delta'<\delta$ and $T'<T$. Furthermore, set
$B'=P'\times [-a',\,a']$ with $\eps<a'<a$, and consider the
Hamiltonian
$$
G=-\kappa\cdot yt\cdot b(x,t,y)
$$
with $\kappa>0$ and the cut-off function $b$ is equal to one on $B'$
and zero near $\p B$. The Hamiltonian vector field $\xi_G$ is simply
the hyperbolic vector field $(-\kappa t, \kappa y)$ in the
$(t,y)$-plane as long as $(\theta,x,t,y)\in B'$. (Our sign convention
for the Hamilton equation is $i_{\xi_G}\sigma=-dG$.) Furthermore, as
is easy to see, the flow of $G$ preserves the subset $P=\{y=0\}\subset
B$.

Assume now that $e^\kappa \eps<a'$ and denote by $\tj$ the embedding
$j$, given by \eqref{eq:j}, for the $(H,\tf)$-plug. Then $\varphi_G$
sends the image of $\tj$ to the image of $\hj$, the $(\hH,\hf)$-plug
embedding, with
$$
\hH(x,t)= H(x,e^\kappa t)
$$
and 
$$
\hf(x,t)=e^{\kappa} f(x,e^\kappa t).
$$
In these formulas we treat $(x,t)$ as a point in $\R^2$ rather than in
$\Pi$. However, $\hH$ and $\hf$ still satisfy \ref{(P1)}--\ref{(P4)}
with the same $\delta$, $T$ and $a$ as $(H,f)$ and $(H,\tf)$, and
hence give rise to a plug with the same sets $P$ and $B$ as the
$(H,\tf)$-plug and the $(H,f)$-plug. Furthermore,
$$
\|\hf\|_{C^0}<e^{\kappa}\eps \textrm{ and }
\supp(\hH-x)\subset S^1\times [-\delta,\,\delta]\times
[-e^{-\kappa}T,\, e^{-\kappa}T].
$$
The only constraint on $\kappa>0$ and $\eps>0$ is that
$e^{\kappa}\eps<a'$. Thus, by choosing a sufficiently large $\kappa$
and then a sufficiently small positive $\eps$, we can make
$e^{\kappa}\eps$ and $e^{-\kappa}T$ arbitrarily small. By
construction, the $(H,f)$-plug, the $(H,\tf)$-plug, and the
$(\hH,\hf)$-plug are Hamiltonian diffeomorphic. It follows that
starting with an arbitrary $(H,f)$-plug one can find a sequence of
$(H_k,f_k)$-plugs (with $H_1=H$ and $f_1=f$), Hamiltonian
diffeomorphic to each other and satisfying \ref{itm:(i)} and
\ref{itm:(ii)}. This completes the proof of Proposition
\ref{prop:plugs}.  \hfill $\qed$

\end{document}